
\documentclass[12pt]{article}
\usepackage{amsmath,amsthm,amsfonts,amssymb,amscd}
\usepackage[latin1]{inputenc}
\usepackage{color}
\usepackage{relsize}


 \newtheorem{theorem}{Theorem}[section]
 \newtheorem{cor}[theorem]{Corollary}
 \newtheorem{lemma}[theorem]{Lemma}
 \newtheorem{prop}[theorem]{Proposition}
 \theoremstyle{definition}
 \newtheorem{definition}[theorem]{Definition}
 
\newtheorem{maintheorem}{Theorem}

 \theoremstyle{remark}
 \newtheorem{rem}[theorem]{Remark}

 \DeclareMathOperator{\slope}{slope}
 \newcommand{\tf}{$\theta$-flat }

 \newcommand{\abs}[1]{\left\vert\,#1\right\vert}

 \newcommand{\norm}[1]{\left\Vert#1\right\Vert}

 \newcommand{\UU}{\mathcal{U}}
 \newcommand{\VV}{\mathcal{V}}
 \newcommand{\CC}{\mathcal{C}}
 \newcommand{\PP}{\mathcal{P}}
 \newcommand{\HS}{\mathcal{H}}

 \newcommand{\ff}{\big(f_{a,\theta}\big)_{a,\theta}}

 \newcommand{\dist}{\operatorname{dist}}
 \newcommand{\ang}{\operatorname{angle}}
 \newcommand{\ve}{\textbf{e}}
 
 \newcommand{\vt}{\textbf{t}}
 
 \newcommand{\Leb}{\operatorname{Leb}}

 \newcommand{\wzero}{\text{\bf w}_0}
 \newcommand{\wn}{\text{\bf w}_n}
 \newcommand{\wj}{\text{\bf w}_j}
 \newcommand{\wjp}{\text{\bf w}_{j-1}}
 \newcommand{\wnp}{\text{\bf w}_{n-1}}
 \newcommand{\wpp}{\text{\bf w}_{p}}
 \newcommand{\bfw}{\text{\bf w}}

\usepackage{psfrag}
\usepackage[dvips]{graphicx}
\DeclareGraphicsExtensions{.eps}



\title{Non-periodic bifurcations for surface diffeomorphisms}
\author{Vanderlei Horita, Nivaldo Muniz and Paulo Rog\'{e}rio Sabini}
\date{\today}


\begin{document}

\maketitle

\begin{abstract}
We prove that a ``positive probability" subset of the boundary of the
set of hyperbolic (Axiom A) surface
diffeomorphisms with no cycles $\HS$ is constituted
by Kupka-Smale diffeomorphisms: all periodic points are hyperbolic and
their invariant manifolds intersect transversally.
Lack of hyperbolicity arises from the presence of a tangency between a
stable manifold and an unstable manifold, one of which is {\em not}
associated to a periodic point.
All these diffeomorphisms that we construct lie on the boundary of the
same connected component of $\HS$.
\end{abstract}

\let\thefootnote\relax\footnote{2000 {\it Mathematics Subject Classification}. Primary 37G25, 37D25, 37G35.}
\let\thefootnote\relax\footnote{{\it Key words and phrases}. 
Non-periodic bifurcation, heteroclinic tangency, 
non-uniformly hyperbolicity.}
\let\thefootnote\relax\footnote{Work partially supported by CAPES, CNPq, FAPESP, INCTMat and
PRONEX.}

\section{Introduction}

One of the most challenging problems in Dynamical Systems theory is
to understand how the stability breaks down under small changes of
the evolution law.
We say that a system is {\em stable} if there exists a neighborhood
where all systems are topologically conjugated to it.
It is a well known fact that hyperbolic (Axiom A) systems with no
cycles are stable.
A very successful method to study the breakdown of hyperbolicity is
by considering parametrized families of systems starting inside the
hyperbolic domain and describing the ways how hyperbolicity is
destroyed when the parameter varies.
Many authors have studied this problem following this approach, see
\cite{BDV98,DHRV09,En98,HMS07,MY1,NPT76,PT85,PT87,PV94,PY94,So73a},
just to mention a few references.
In all these works the mechanism responsible for the break down of
stability involves periodic orbits; indeed, it falls in one of the
following two types\,:
\begin{description}
\item{(NH)} {\em there exists an unique periodic orbit that is
non-hyperbolic,} and it is either a saddle-node (one eigenvalue equal
to $1$), a period-doubling (one eigenvalue equal to $-1$), or a Hopf
orbit (two complex conjugate eigenvalues with norm $1$);

\item{(NT)} all the periodic orbits are hyperbolic, but {\em there
exists an unique non-transverse intersection between some stable and
some unstable manifold of periodic orbits}; this intersection is
quasi-transverse (codimension $1$).
\end{description}

Newhouse and Palis in \cite{NP76} conjectured that (NH) and (NT)
are the generic mechanism for the collapse of hyperbolicity along
families of diffeomorphisms starting from a Morse-Smale
diffeomorphism.
That is, generically, Morse-Smale diffeomorphisms should remain
hyperbolic for as long as they remain Kupka-Smale.
Newhouse, Palis, and Takens in \cite{NPT83} show that this conjecture
is true when the limit set (the set of forward and backward limit points)
is still finite at bifurcation parameter\,: generically the bifurcating
diffeomorphisms is of type (NH) or (NT).
To the best of our knowledge there has been essentially no other
significant progress in the direction of this conjecture.

Motivated by the advances in the theory of Hénon-like dynamics, Bonatti,
Viana suggested that the problem should be approached from a
probabilistic point of view, and, in this context, the conclusion should
be opposite.
They conjectured that there exists a subset of the boundary of the set
of hyperbolic systems which is formed by Kupka-Smale diffeomorphisms
and has positive measure, in some natural sense, so that (NH) and (NT)
should not account for {\em almost all} transitions to non-hyperbolicity.
The conjecture was proved by the present authors \cite{HMS07} in the
setting of non-invertible circle maps.
In the present work we prove that the Bonatti-Viana conjecture is also
true for surface diffeomorphisms.

In all what follows $M$ will denote a surface
and $\HS$ the set of hyperbolic (Axiom A) systems with the no-cycle condition
defined on $M$.

\begin{maintheorem}
\label{teo.main}
There is an open set $\mathcal{U}$ of $2$-parameter families $\ff$
of surface diffeomorphisms such that for a positive set
$\mathcal{A}$ of parameters $a$
\begin{itemize}
\item[(a)] for some $\theta^* = \theta^*(a)$, the map $f_{a,\theta^*}$
has a heteroclinic (cubic) tangency between invariant manifolds, one of
which is not associated to a periodic point;
\item[(b)] all $f_{a,\theta^*}$ belong to the boundary of $\HS$.
\item[(c)] all periodic points of $f_{a,\theta^*}$ are hyperbolic;
\item[(d)] all intersections between stable and unstable manifold of
periodic points of $f_{a,\theta^*}$ are transverse;
\end{itemize}
\end{maintheorem}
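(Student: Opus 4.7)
The plan is to combine a H\'enon-like mechanism in the first parameter $a$ with an external periodic saddle $Q$ whose stable manifold is swept by the second parameter $\theta$. The first parameter produces a non-uniformly hyperbolic invariant set that carries the unstable manifold of a non-periodic orbit; the second creates the tangency between that unstable manifold and $W^s(Q)$.

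The starting diffeomorphism $f_0\in\HS$ possesses a horseshoe $\Lambda$ with a generic homoclinic tangency and a disjoint periodic saddle $Q$ whose stable separatrix runs close to the tangency locus. The open set $\UU$ is defined by three $C^r$-open conditions on the family $\ff$: (i) $a$ unfolds the tangency of $\Lambda$ generically, in the Mora--Viana sense, producing H\'enon-like dynamics; (ii) $\theta$ moves $W^s(Q)$ transversally across the unstable foliation in the region of the H\'enon-like set; and (iii) an additional geometric condition at $(0,0)$ guarantees that the first tangency created by the sweeping is of cubic, rather than merely quadratic, contact order at a critical fold of the unstable foliation.

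By the surface H\'enon-like theory of Mora and Viana, one obtains a positive Lebesgue measure set $\A_0\subset(0,a_0]$ such that for every $a\in\A_0$ and all small $\theta$, $f_{a,\theta}$ carries a non-uniformly hyperbolic invariant set $\Omega_a$ with a controlled critical unstable lamination and uncountably many non-periodic orbits; all periodic points of $\Omega_a$ are hyperbolic by construction, and those outside $\Omega_a$ (including $Q$) by structural stability, giving (c). For each $a\in\A_0$ let $\theta^*(a)$ be the first value at which $W^s(Q)$ becomes tangent to a leaf of the unstable lamination of $\Omega_a$; by (iii) this tangency is cubic. The set of $a\in\A_0$ at which the tangency at $\theta^*(a)$ involves a prescribed periodic orbit $\gamma$ is at most countable (transversal intersection of two curves in the $(a,\theta)$-plane), so removing the countable union over all periodic $\gamma$ leaves a positive measure set $\A\subseteq\A_0$ on which the tangency is with a non-periodic orbit, proving (a). A Baire-type refinement inside $\UU$ removes the codimension-$1$ loci where two periodic invariant manifolds would become tangent, establishing (d) without spoiling the positive measure of $\A$. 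For (b), the set $\A$ is a Cantor-type set interleaved with intervals of parameters for which $f_{a,\theta}$ is Axiom A with no cycles, all in the connected component of $\HS$ obtained by continuation from $f_0$; slightly moving $\theta$ away from $\theta^*(a)$ in the appropriate direction destroys the sole obstruction to hyperbolicity.

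The hardest step is (a): for a positive-measure set of $a$, the first tangency at $\theta^*(a)$ must be simultaneously with a non-periodic orbit and of cubic contact order. Non-periodicity depends on the fine structure of the critical locus of $\Omega_a$, while cubic order is a codimension-$1$ constraint in $(a,\theta)$ that must be realized along the whole graph $\theta=\theta^*(a)$; reconciling these is exactly the role of the geometric condition (iii) in the definition of $\UU$, and its verification along a typical $a\in\A$ is the technical heart of the argument.
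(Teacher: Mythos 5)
Your architecture differs essentially from the paper's, and the difference creates a gap at item~(b). You first unfold a homoclinic tangency of $\Lambda$ in $a$ to obtain, following Mora--Viana, a positive-measure set $\A_0$ of parameters carrying a non-uniformly hyperbolic (H\'enon-like) set $\Omega_a$, and only afterwards sweep $W^s(Q)$ with $\theta$ until it is tangent to a leaf of the unstable lamination. But for $a\in\A_0$ and $\theta$ near $\theta^*(a)$ the map $f_{a,\theta}$ is already non-hyperbolic because of $\Omega_a$, independently of where $W^s(Q)$ sits; the heteroclinic tangency created at $\theta^*(a)$ is not the only obstruction to hyperbolicity, and $f_{a,\theta^*}$ is not approached from within $\HS$. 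Your closing claim that ``slightly moving $\theta$ away from $\theta^*(a)$ destroys the sole obstruction'' is therefore false in this setup, and you have not established that the bifurcating maps are on $\partial\HS$ --- much less the sharper fact proved in the paper (cf.\ the discussion after Theorem~\ref{teo.main} and Theorem~\ref{t.teo_a}) that they lie on the boundary of a single connected component of $\HS$.

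The paper avoids this by making the cubic tangency itself the \emph{first} bifurcation. It starts from a heteroclinic point $q\in W^u(P)\cap W^s(Q)$ inside one strongly dissipative hyperbolic attractor $\Lambda$ and performs a localized deformation $(H_3)$ near $\hat q=f^{-1}(q)$ whose derivative develops cubic contact with the vertical direction only at $\theta=\theta^*(a)$; for $\theta<\theta^*(a)$ the image of the horizontal cone stays uniformly transverse to the vertical, and the Benedicks--Carleson/Mora--Viana exclusion argument is used not to manufacture a strange attractor but to ensure exponential growth of horizontal derivatives along critical orbits, which together with strong dissipativeness gives uniform hyperbolicity of $f_{a,\theta}$ for $(a,\theta)\in\Omega_\infty(m)$ (Proposition~\ref{p.exp-gr}, Corollary~\ref{c.exp-gr}), and hence a genuine approach to $\partial\HS$. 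The cubic (rather than quadratic) contact, which you relegate to an auxiliary condition~(iii), is the technical heart: it keeps the critical derivative nonzero for $\theta<\theta^*$ and forces a reworking of bound-period lengths, fold-free estimates, and the recovery inequalities (Lemma~\ref{lem:boundp}, Section~\ref{ss:tang}). Finally, your argument for~(a) --- countability of the tangency-with-$\gamma$ loci summed over periodic $\gamma$ --- is too quick: the locus of a fixed $\gamma$-tangency along the graph $\theta=\theta^*(a)$ need not be a transverse, hence finite or countable, intersection. The paper instead combines the recurrence control of the critical orbit with bounded distortion between phase and parameter space to show that the parameter set whose tangency lies on a periodic stable manifold has Lebesgue measure zero (Proposition~\ref{p.non-per}).
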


In fact, the diffeomorphisms $f_{a,\theta^*}$ that we construct are
in the boundary of the same connected component of $\HS$
(their lack of hyperbolicity
follows from item (a)).
The corresponding property for circle maps could not be proved in
\cite{HMS07}.
We present here a proof of that fact,
thus strengthening Theorem A in
that paper.

\begin{maintheorem}[Theorem A of \cite{HMS07} revisited]
\label{t.teo_a}
There exists an open set $\mathcal{U}$ of $2$-parameters families
$(f_{a,\theta})_{a,\theta}$ of maps of the circle such that for some
$\theta_* = \theta_*(a)$, the map $f_{a,\theta_*}$ has a (cubic) critical
point.
Moreover, for a positive Lebesgue measure set $\mathcal{A}$ of
parameters $a$:
\begin{itemize}
\item[(1)] there exists a continuous curve $a(\theta)$ in the
parameter space $(a,\theta)$, with $a(\theta_*) \in \mathcal{A}$,
such that $f_{a(\theta),\theta}$ belongs to the interior of the
uniformly hyperbolic (expanding) domain for every $\theta <
\theta_*$;
\item[(2)] all periodic points of $f_{a,\theta_*}$ are hyperbolic
(expanding), and no critical point is pre-periodic.
\end{itemize}
\end{maintheorem}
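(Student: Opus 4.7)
The plan is to take the existence of the map $\theta_*(a)$ and of the positive-Lebesgue-measure set $\mathcal{A}$, together with item~(2), as already established in Theorem~A of~\cite{HMS07}, and to focus on the new content in item~(1): the existence of a continuous curve $\theta\mapsto a(\theta)$ with $a(\theta_*)\in\mathcal{A}$ which lies in the interior of the uniformly expanding domain for every $\theta<\theta_*$.

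To set the stage, inside the open set $\mathcal{U}$ of $2$-parameter families I would introduce $H=\{(a,\theta)\in\mathcal{U}:f_{a,\theta}\text{ is uniformly expanding}\}$, which is open by structural stability of expanding circle maps, together with its connected component $H_0$ containing the ``low-$\theta$'' region where the family is, by construction, trivially expanding. The target statement becomes: for every $a^*\in\mathcal{A}$ the point $(a^*,\theta_*(a^*))$ lies on $\partial H_0$, and a continuous curve inside $H_0$ landing on it can be selected.

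The first ingredient is a local tail statement: there is $\delta>0$ such that $(a^*,\theta)\in H$ for all $\theta\in(\theta_*(a^*)-\delta,\theta_*(a^*))$. The cubic nature of the critical point is what enters here, since at $\theta=\theta_*(a^*)$ the map has a single critical point $c$ with $f'(c)=f''(c)=0$ and $f'''(c)\neq 0$, so that for $\theta$ slightly below $\theta_*(a^*)$ this critical arc unfolds into a strictly monotone piece whose derivative is bounded away from zero. Combined with hyperbolicity of all periodic orbits at $\theta_*(a^*)$ from item~(2) and the openness of hyperbolicity, this gives the local tail inside $H$.

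The main step is then to show that this tail actually lies in $H_0$, that is, that it can be joined to the low-$\theta$ region by a path inside $H$. I would argue this by continuation: starting from $(a^*,\theta_*(a^*)-\delta)$, decrease $\theta$ while simultaneously adjusting $a$ so as to skirt the codimension-one bifurcation loci (saddle-nodes, period-doublings, and newly created critical points) that form $\partial H$, exploiting genericity inside $\mathcal{U}$. Once the tail is known to belong to $H_0$, path-connectedness of the open connected set $H_0\subset\Real^2$ furnishes a continuous path from the low-$\theta$ base region to an interior point of the tail, and a monotone reparametrization in $\theta$ yields the desired curve $a(\theta)$. The main obstacle, in my view, is the continuation step: showing that $(a^*,\theta_*(a^*))$ is not separated from $H_0$ by a thick layer of non-hyperbolic parameters requires controlling precisely how expansion can fail as $\theta$ decreases and verifying that each such failure can be removed by a small adjustment of $a$, in the same spirit as the connected-component argument announced for the surface case in Theorem~\ref{teo.main}.
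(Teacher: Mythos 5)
Your proposal misses the key mechanism that the paper uses, and the steps you sketch as replacements have genuine gaps.

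First, the ``local tail'' step is not justified. You claim that for $\theta$ slightly below $\theta_*(a^*)$ the map $f_{a^*,\theta}$ is uniformly expanding because the cubic critical point unfolds into a monotone piece with derivative bounded away from zero, and because periodic orbits at $\theta_*(a^*)$ are hyperbolic. But the derivative at the former critical point is only of size $O(|\theta - \theta_*|)$, hence arbitrarily small near $\theta_*$; it is nowhere asserted, and is in fact false in general, that this alone forces uniform expansion. Orbits that recur near the former critical point may lose an unbounded amount of derivative growth, and controlling that recurrence is exactly the content of the Benedicks--Carleson exclusion argument, not something that follows from ``derivative nonzero plus hyperbolic periodic points.'' Item~(2) by itself gives nothing about uniform hyperbolicity for nearby $\theta$. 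Second, the continuation step (``decrease $\theta$ while adjusting $a$ to skirt the bifurcation loci'') is only announced, not proved, and in this setting the failure set can a priori be topologically complicated; genericity of $\mathcal{U}$ does not produce a path through $H$.

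What the paper actually does is structurally different and fills precisely these holes. It returns to the exclusion construction of~\cite{HMS07} and observes that for fixed $\theta<0$ the ``critical point'' has a strictly positive derivative $\varepsilon=f'_{a,\theta}(a)>0$. Using this, one picks $m_0$ with $\varepsilon\sigma^{m_0}>e^{\tilde c m_0}$ and shows that after a finite time $\tilde n=\tilde n(\theta)$ every return situation can be handled without any further exclusions: the loss of expansion from a visit to the critical region is automatically recovered because the derivative there is bounded below by $\varepsilon$. Consequently $\Omega_\theta=\Omega_{\theta,\tilde n}$, which is a \emph{finite union of intervals}, hence open in $a$. This is the crucial point your proposal does not have: for $\theta<\theta_*$ the good parameter set has nonempty interior, not just positive measure. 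Because the whole structure varies continuously in $\theta$ and no exclusions occur in $[-1,\theta_0]$ once $n\geq\tilde n(\theta_0)$, the good region in the $(a,\theta)$-plane contains, for each $a\in\mathcal A$, a continuous curve $a(\theta)$ over $[-1,0)$ with $a(\theta_*)=a$, and each $f_{a(\theta),\theta}$ lies in the interior of the expanding domain. Your approach via connected components of $H$ would need exactly this openness-in-$a$ and the termination of exclusions at finite time as inputs; without them it does not close.
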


The diffeomorphisms that we consider in Theorem~\ref{teo.main}
are (strongly) dissipative, indeed we view then as a kind of singular
perturbation of the cubic maps \cite{HMS07} in much the same way as
Hénon-like diffeomorphisms are treated as perturbations of quadratic
maps of the interval 
 in works as \cite{BC91}, \cite{MV93},
see Remark~\ref{r.one2two}.

We adapt techniques developed by Benedicks, Carleson \cite{BC91} and
Mora, Viana \cite{MV93} in those papers to obtain exponential growth
of the derivative in some direction.
An important new difficulty arises from the fact that the tangency
(criticality) for the ``unperturbed" circle maps is degenerate (cubic).
This makes it specially tricky to detect and control tangencies for the
kind of singular perturbation that we deal with, all the more so because
we also have to deal with stable manifolds not associated to any
periodic point.
A key ingredient to circumvent this difficulty is to establish a good
notion of critical points and appropriated control of degeneration of
angles between tangent direction of invariant manifolds.

The core of this paper comes from Paulo Sabini's doctoral thesis. We
considered it worthwhile to work on that initial text, sharpening the
results and improving the arguments, and that led to the present joint
paper. Sadly, during that work, Paulo passed away, at the age of 33.
We miss him deeply.

\medskip

{\it Acknowledgements.} V. Horita and N. Muniz are grateful to M. Viana
by bring the authors together in this subject and by discussion of mathematical
ideas in this paper, especially after P. Sabini passed away.
It was crucial the encouraging of Marcelo for the completion of this
article.

\section{Proof of Theorem~\ref{t.teo_a}}

\begin{proof}

The claim is that the families present in Theorem A of \cite{HMS07}
satisfy both items. The second one has already been proved so we just need  to prove
item (1).

Recall the definition of the families in \cite{HMS07}.
We can suppose, by reparametrizing those families, that $\theta_*(a) =
0$ for all $a$. The construction of the set $\mathcal{A}$ is based
on the method present in \cite{BC85} for quadratic maps. Roughly
speaking, we exclude parameters $a$ that infringe a given restriction
of the loss of expansion due to the proximity of a point that we call
the {\em critical point}.
Although in our context we do not have a critical point for negative
$\theta$, we introduce a notion that has the same role.
In \cite{HMS07}, for each fixed parameter $\theta \le 0$ we perform
an exclusion of parameters $a$
in order to obtain a positive measure subset $\Omega_\theta$ of
parameters $a$ such that the orbit of the critical point presents
expansion of the derivative. In fact, we do not take into account the
fact that our \emph{critical point} is not  a true criticality
(zero derivative). At the end of the construction we obtain a
set $\Omega_\theta$ with empty interior, but positive measure.
Here, we intend to revisit those techniques in a more accurate way for showing
that it is possible to get $\Omega_\theta$ as an union
of a finite number of intervals of parameters. The reason why this is
expected is because after a certain step $n = n(\theta)$ of the
construction we do not have to exclude parameters anymore
since the derivative is uniformly bounded
away from zero.

The exclusions of $a$-parameters for a fixed $\theta < 0$
take place for two
reasons: the first one is to control the recurrence of returns of
the critical point, and the second one is to avoid too frequent
recurrence to the critical region $(a-\delta,a+\delta)$. The
heuristic to circumvent these two mechanisms of exclusion of
parameters follows.
Let $\varepsilon = f_{a,\theta}'(a) > 0$ be the derivative of
$f_{a,\theta}$ at the critical point $a$.
By construction, $a$ is taken close to the distinct pre-image
$\bar{a}$ of the fixed point $p$ of the initial map $f$, see
\cite[Section~1.4]{HMS07}. We address the reader to
follow this discussion in parallel to Section 2 of \cite{HMS07}.
In particular in the sequel $c$ stands for the expansion
of derivatives stated therein.
Let $J$ be a small neighborhood of $p$ such that
$\sup\{f_{a,\theta}'(x) \colon x \in J \} \ge \sigma > 1$.
Fix an integer $m_0>0$ satisfying $\varepsilon \cdot
\sigma^{m_0} > e^{\tilde{c}\,m_0}$, for some constant $\tilde{c}>0$.
We can suppose (by reducing $\eta$ in Proposition 2.3 of
\cite{HMS07}, if necessary), that $f_{a,\theta}^j(a)
\in J$ for every $1\le j \le m_0$.
We can even suppose that $m_0$ is large enough implying
$\tilde{c}>c$, since $\sigma > e^c$.
By continuity, there exists $\zeta >0$ such that for every
$1\le j \le m_0$, we have
$$
f_{a,\theta}^j(x) \in J \qquad \text{for every } x \in (a-\zeta,
a+\zeta).
$$
Let us fix $\hat{c} < c$ with $\abs{\hat{c}-c} \ll 1$ and
let $\tilde{n} \geq 1$ be such that
$e^{-\tilde{n} \alpha} < \zeta$ and $\varepsilon e^{c\tilde{n}} \geq e^{\hat{c}(\tilde{n}+1})$.
Let $\omega \in \Omega_{\theta,\tilde{n}}$ and
Let $n_k$ be the first return situation for $\omega$ after $\tilde{n}$.
\begin{align}
&|(f_{a,\theta}^j)'(f_{a,\theta}(a))| \ge e^{c\,j}, \qquad \text{for } 1\le j \le n_k-1.\\
\label{sgda}
&|(f_{a,\theta}^j)'(f_{a,\theta}(a))| \ge \varepsilon e^{c\,(j-1)}e^{\tilde{c}(j-n_k)} \ge e^{\hat{c}\,j}, \qquad \text{for } n_k \le j \le n_k+m_0-1.\\
&|(f_{a,\theta}^j)'(f_{a,\theta}(a))| \ge e^{c\,n_k}e^{\tilde{c}\,m_0} \ge e^{c\,(n_k+m_0)} \qquad \text{for } j = n_k+m_0.\\
&|(f_{a,\theta}^j)'(f_{a,\theta}(a))| \ge e^{c\,j} \qquad \text{for } n_k+m_0+1 \le j < n_{k+1}.
\end{align}

Hence every return situation
 after $\tilde{n}$
can be dealt with without exclusions for $(BA)_n$ violations and every parameter $a$
not excluded up to that time will satisfy:
\[
   |(f_{a,\theta}^j)'(f_{a,\theta}(a))| \ge e^{\hat{c}\,j} \qquad \text{for }j\geq 1
\]

%
%

Thus,
$$
\Omega_\theta = \lim_{n\to\infty}\Omega_{\theta,n} = \Omega_{\theta,\tilde{n}}.
$$

Recall that for each $n$ the set $\Omega_{\theta,n}$ is the union of a
finite number of intervals of parameters $a$, see
\cite[Section~4]{HMS07}. So, in each vertical line over $\theta$,
$-1 \le \theta \le 0$, we have a finite union of intervals
(see Figure~\ref{f.blabla}).
Moreover, for $\theta$ close to $-1$, we do not need to exclude
any parameter, because the initial map is uniformly expanding.
When $\theta$ increases, the derivative decreases in the perturbation
region and some exclusion of parameters is necessary due the loss
of expansion. Increasing $\theta$ we have to exclude more
and more intervals of parameters $a$. Since the whole structure varies
continuously with $\theta$, we have \emph{legs} in the
$(a,\theta)$-plane such that all maps outside them are uniformly expanding (see
Figure~\ref{f.blabla}).

\begin{figure}[phtb]
\centering
\psfrag{tn}{$\theta_0$}
\psfrag{l}{$legs$}
\psfrag{a}{$a$}
\psfrag{t}{$\theta$}
\includegraphics[height=5cm]{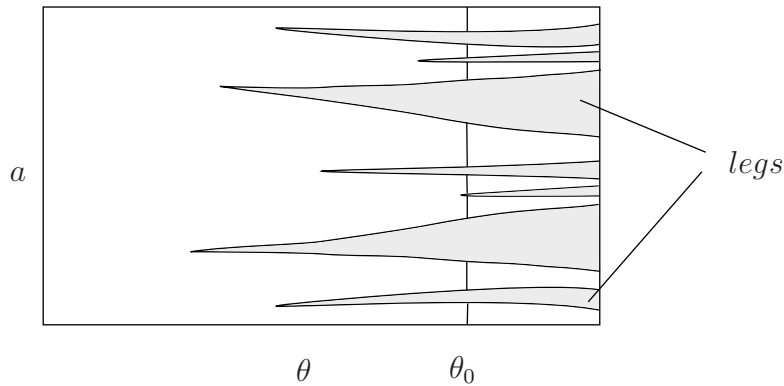}
\caption{Parameter space for a fixed $n$} \label{f.blabla}
\end{figure}

As we see above, fixed a negative $\theta_0$ close to $0$ there is
$\tilde{n}(\theta)$ (which can be supposed increasing with $\theta$) such
that for all $n \ge \tilde{n}$ we do not need to exclude more parameters of
$\Omega_{\theta,n}$, for every $\theta < \theta_0$.
Thus, the parameters space do not change in the rectangle
$[-1,\theta_0] \times [a-\eta,a+\eta]$.

Finally, when $\theta_0$ goes to $0$ and $n$ tends to infinity, for each
$a \in \mathcal{A}$ we have a continuous path
$$
a(\cdot):[-1,0] \to [-1,0) \times [a-\eta,a+\eta]
$$
inside the parameter space such that each corresponding map is
uniformly expanding. This completes the proof of Theorem~\ref{t.teo_a}.
\end{proof}

\section{The families}\label{s:fam}

Here we describe the families which are object of
Theorem~\ref{teo.main}. In the following sections we prove
that they satisfy the claims.

Let $f$ be a $C^r$-diffeomorphism, $r\ge 3$ defined on a compact
boundaryless surface $M$ having a {\it strongly dissipative non
trivial minimal attractor} $\Lambda$, that is, $\Lambda$ is a
minimal, (uniformly) hyperbolic, transitive (it has a dense orbit),
and attracting set ($\Lambda = \cap_{n \in \mathbb{N}}
f^n(\mathcal{U})$ for some neighborhood $\mathcal{U}$ of $\Lambda$)
where the contraction of the stable bundle is much stronger than the
expansion of the unstable bundle in a sense to be make precise in a
little while.
So, $\Lambda = \overline{W^u(P)}$ for every periodic point $P$ of $f$.

Let $P$ and $Q$ be periodic points in $\Lambda$ with distinct
orbits, and let $q \in \Lambda$ be a heteroclinic point such that
$q \in W^u(P) \cap W^s(Q)$.
For simplicity we suppose that both $P$ and $Q$ are fixed
points of $f$.

Assuming a finite number of nonresonance properties on the
eigenvalues of $P$ and $Q$ we can assume that $f$ is linearizable at
$P$ and $Q$ and that the linearizing coordinates vary continuously
on a small $C^3$-neighborhood $\VV$ of $f$, see \cite{St58},
\cite{BDV98}, and \cite{En98} for details.
For every $g \in \VV$, we denote $(x_i,y_i)$, $i=1,2$ such coordinates
on neighborhoods $\UU_i$ of $P$ and $Q$, respectively (we omit the
dependence of the coordinates on the diffeomorphism, for simplicity
of notation), see Figure~\ref{f.coordenadas}.

\begin{figure}[phtb]
\centering
\psfrag{P}{$P$}
\psfrag{Q}{$Q$}
\psfrag{q}{$q$}
\psfrag{r}{$\hat{q}$}
\includegraphics[height=6.5cm]{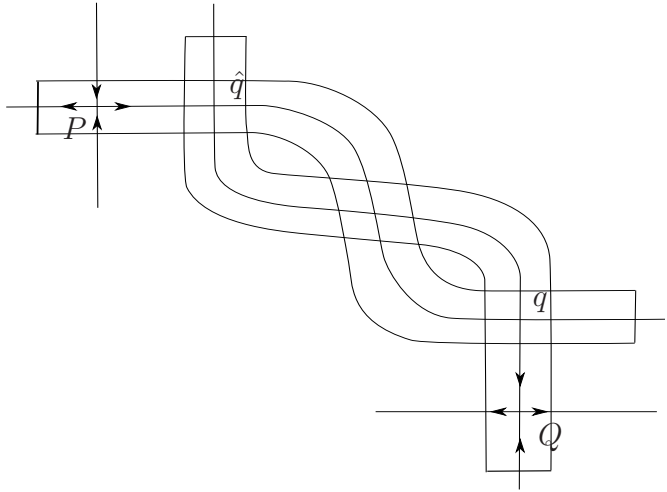}
\caption{Linearizing coordinates} \label{f.coordenadas}
\end{figure}

In this system of coordinates, points $z = (x_1(z),y_1(z))$ in
$W^s_{loc}(P)$ (resp. in $W^u_{loc}(P)$) are such that $x_1=0$
(resp. $y_1=0$), and similarly points $z=(x_2(z),y_2(z))$ in
$W^s_{loc}(Q)$ (resp. in $W^u_{loc}(Q)$) are such that $x_2=0$
(resp. $y_2=0$). By extending the neighborhoods $\UU_i$, if
necessary, we can suppose that $\UU_1 \cap \UU_2$ contains $q$ and
$\hat{q} = f^{-1}(q)$. The previous systems of coordinates
$(x_i,y_i)$ define in a neighborhood $V$ of $q$ and $\hat{V}$ of
$\hat{q}$, respectively, coordinates $(x,y)$ and
$(\hat{x},\hat{y})$, where $x$ and $\hat{x}$ are given by $x_2$, and
$y$ and $\hat{y}$ are given by $y_1$. In $V$, the expression of any
diffeomorphism $g \in \VV$ in these coordinates is linear:
$$
x(g(z)) = \sigma_1 \cdot \hat{x}(z) \quad \text{ and } \quad
y(g(z)) = \lambda_2 \cdot \hat{y}(z),
$$
where $\lambda_1$ and $\sigma_1$ (resp. $\lambda_2$ and $\sigma_2$)
are eigenvalues of $P$, (resp. $Q$).
It follows from the assumption of strong dissipativeness that
$|\lambda_i| \ll 1 < |\sigma_i|$.
Let us write $\lambda_2 = b$ in analogy to parameters of Hénon-like
families ($b \ll 1$).
For simplicity of notation, let us denote both coordinates on
$\hat{V}$ and $V$ as $(x,y)$.

We consider a $2$-parameter $C^r$-family $\tilde{f}_{a,\theta} : M
\to M$, $a \in [-\eta,\eta]$ and $\theta \in [-\eta,\eta]$ as follows.

\begin{itemize}
\item[$(H_1)$] For every $a$, the map $\tilde{f}_{a,\theta_0}$
is $C^r$-close to $f$ (and so it is uniformly hyperbolic).

\item[$(H_2)$]
There is an open rectangle $R_0 \subset \hat{V}$ containing $\hat{q}$ such
that, for each $a$ and $\theta$, the maps
$\tilde{f}_{a,\theta}$ and $f$ are $C^r$-close outside $R_0$.
\end{itemize}

We choose $\delta > 0$ sufficiently small such that the square
$R_\delta = [-\delta, \delta] \times [-\delta, \delta]$ centered in
$\hat{q}$ is contained in $R_0$, see also Section~\ref{ss:mane}.
We write $S = \tilde{f}_{a,\theta_0} (R_\delta)$.
Let us assume that $\eta>0$ small enough in order to have $q+(a,0) \in S$,
for every $a\in [-\eta,\eta]$.
\begin{itemize}
\item[$(H_3)$]
We deform $\tilde{f}_{a,\theta_0}$ inside $R$ in such a way that
$\tilde{f}_{a,\theta}$ has local form in $R_\delta$ given by
$$
\Phi_{a,\theta}(x,y) = (a +  \sigma_1 y +
x(- A_1 \theta + B_1 x^2 + C_1 y^2),
- b x + y(- A_2 \theta + B_2 x^2 + C_2 y^2)),
$$
where
$A_i,B_i,C_i$ positive constants with $A_2,B_2,C_2 \le K b$, for some constant $K>0$.
See Figure~\ref{f.perturbation}.

\item[$(H_4)$] We choose $B_1 \ge C_1 \ge 4\sigma_1 \delta^{-2} \ge 4A_1 > 0$ and
$C_1 \le 5\sigma_1 \delta^{-2}$ (see Remark~\ref{r.h4} and Section~\ref{ss:mane}),
such that for every $z\in R_0 \setminus R_\delta$ and all norm-1 vector $v=(v_1,v_2)$
with $\slope(v)=\abs{v_2}/\abs{v_1} < (1/10)$ we have
\begin{itemize}
\item[$\bullet$] $ \slope (D\tilde{f}_{a,\theta}(z)\cdot v) \le 1/10$; and \\
\item[$\bullet$] $\| D\tilde{f}_{a,\theta}(z)\cdot v \| > \sigma_0 > 1$,
\end{itemize}
for all $\abs{a} < \eta$ and $\abs{\theta} < \delta^2$. (See Remark~\ref{r.h4}).
\end{itemize}
\begin{figure}[phtb]
\centering
\psfrag{P}{$P$}
\psfrag{Q}{$Q$}
\psfrag{R}{$R$}
\psfrag{S}{$S$}
\psfrag{r}{$\hat{q}$}
\psfrag{q}{$q$}
\psfrag{f}{$\tilde{f}_{a,\theta}(\hat{q})$}
\includegraphics[height=4.5cm]{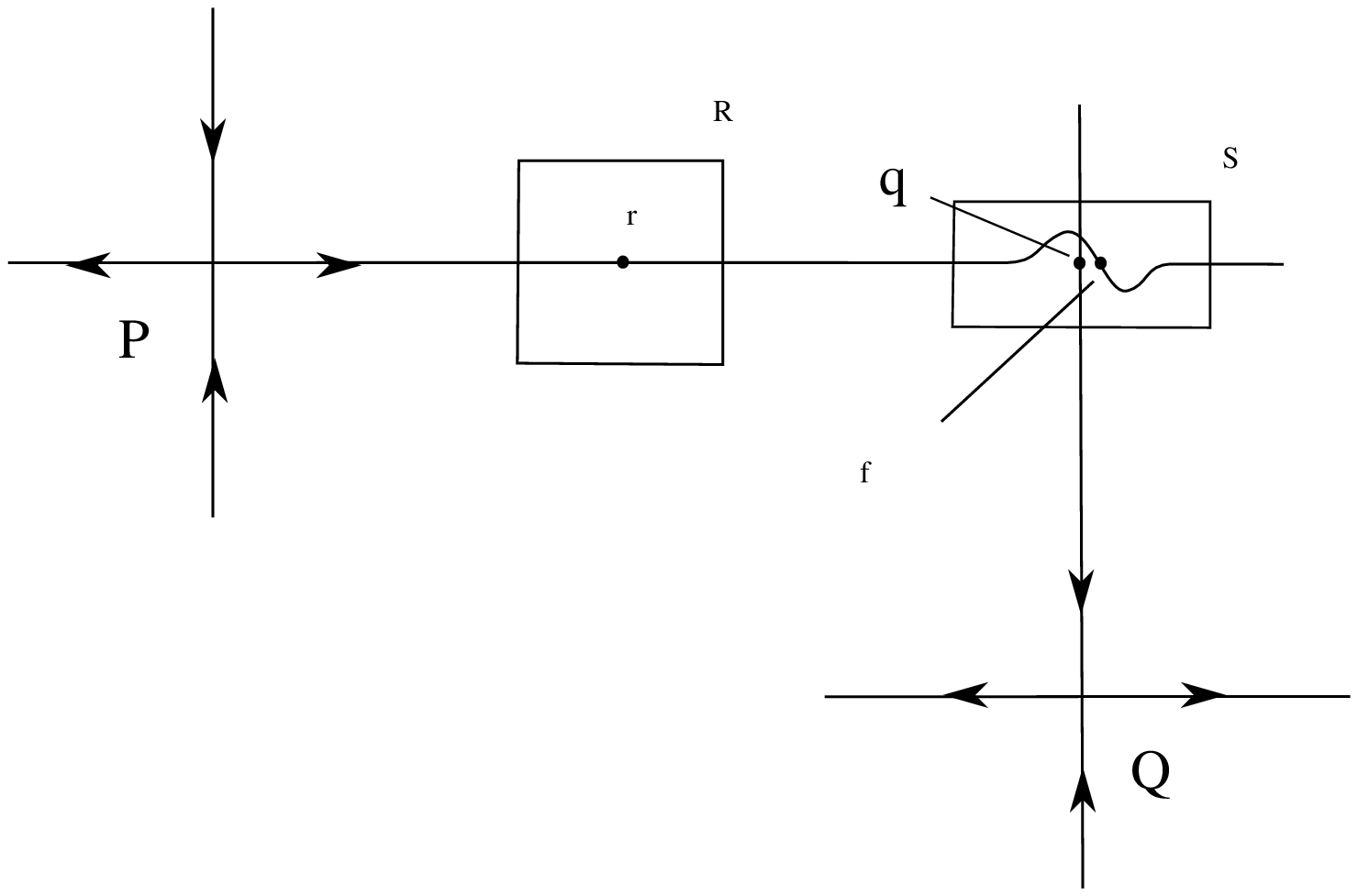}
\caption{Local deformation} \label{f.perturbation}
\end{figure}

Notice that the perturbation is made  in such a way that for every
$\theta < 0$ the angle between the image of horizontal directions by the
derivative of the local form of $\Phi_{a,\theta}$ and the vertical
direction is (uniformly) bounded away from zero.
Furthermore, for $\theta = 0$, the image of the horizontal direction by
$\Phi_{a,\theta}$ and the vertical direction has an unique point of (cubic)
tangency at $q + (a,0)$, see Figure~\ref{f.cubic-local}.
So, for each $a$ there exists $\theta_* = \theta_*(a)$ very close to $0$
such that the slope of the image of a horizontal directions by the
derivative of $\tilde{f}_{a,\theta}$ is (uniformly) bounded away from zero,
for every $\theta < \theta_*$; and, for $\theta = \theta_*$ the image of
the horizontal direction and the vertical direction has an unique point
of (cubic) tangency.
So, we can assume that $[\theta_0,\theta_1] = [-1,\varepsilon]$,
$\varepsilon > 0$.
We remark that, in principle, the tangency has no dynamical meaning,
since the vertical direction may not be part of a contractive bundle.
\begin{figure}[phtb]
\centering
\psfrag{R}{$R$}
\psfrag{r}{$\hat{q}$}
\psfrag{l}{$\Phi_{a,\theta}$}
\psfrag{q}{$q+(a,0)$}
\includegraphics[height=4cm]{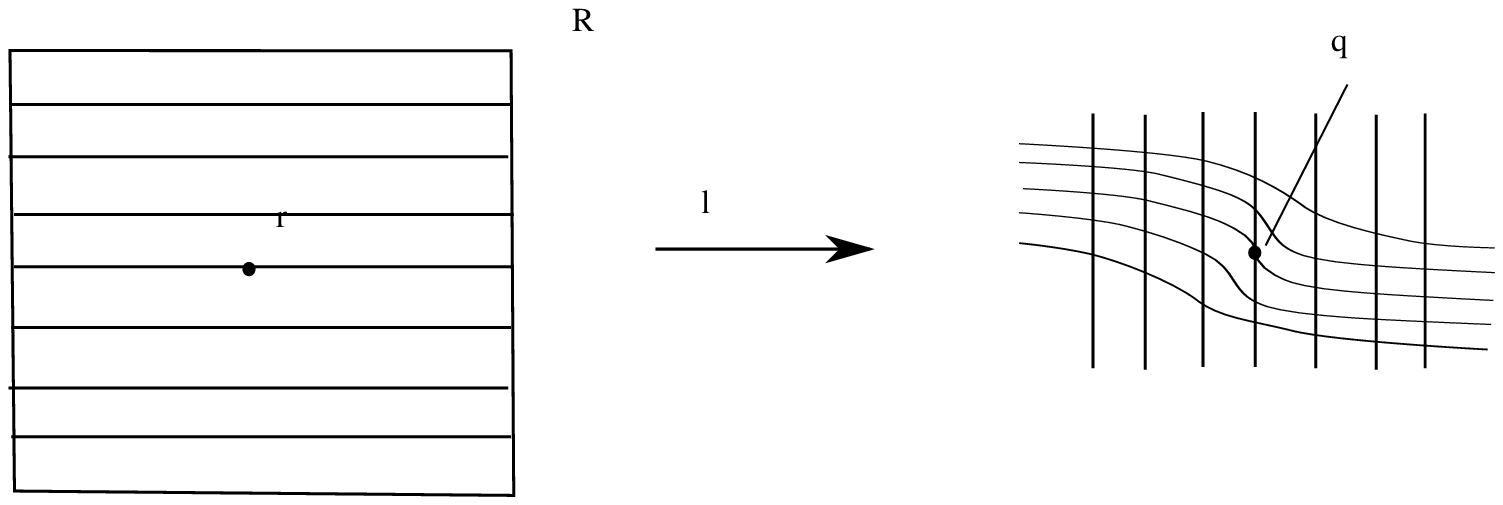}
\caption{Cubic tangency} \label{f.cubic-local}
\end{figure}

According $(H_1)$ all families $(\tilde{f}_{a,\theta})_\theta$
start inside the set of uniformly hyperbolic diffeomorphisms. The
horizontal segment through $\hat{q}$, is mapped by
$\tilde{f}_{a,\theta_0}$ in a curve that makes an angle with the
vertical direction that decreases when $\theta$ increases.
When $\theta = \theta^*(a)$, this curve tangencies the vertical
direction at $q+(a,0)$.
Although the vertical direction has no dynamical meaning, if there are
directions of a contractive bundle that are almost vertical defined
during all process then the 1-parameter family unfolds a cubic tangency
between the unstable manifold of $P$ and a stable manifold of some
point.
Nevertheless, there are no reasons for the existence of those
contractive direction during all the process (and, in general, it seems
that they do not exist). An additional problem is to determine if those
direction, when exist, are not associated to a periodic point. After
overwhelm these two steps, it remains to show that this is the first
bifurcation of the family.

\smallskip

\begin{rem}
\label{r.h4}
Let us comment on the compatibility of assumption $(H_4)$ with
our construction.
Given a norm-1 vector $v = (v_1,v_2)$ with $\slope(v) \leq 1/10$
and $z \in \partial R$, we have
$$
\slope D\Phi_{a,\theta}(z)\cdot v < 1/15
$$
and
$$
\| D\Phi_{a,\theta}(z)\cdot v \| > 2.
$$
\end{rem}
\begin{proof}
Given $z = (x,y)$, from $(H_3)$, we have
\begin{align}
\label{e.v0}
D\Phi_{a,\theta}(z) \cdot v = \big((-A_1\theta + 3B_1 & x^2 + C_1 y^2)v_1 +
(\sigma_1 + 2 C_1 xy)v_2, \\
& (-b + 2B_2xy)v_1 + (-A_2\theta + B_2 x^2 + 3C_2 y^2)v_2\big). \nonumber
\end{align}
For some positive constant $\hat{K}$,
\begin{equation}
\label{e.v1}
\abs{(-b + 2B_2xy)v_1 + (-A_2\theta + B_2 x^2 + 3C_2 y^2)v_2} \le \hat{K}b.
\end{equation}
Since $\norm{v}=1$ and $\abs{\slope(v)} \leq 1/10$ we get $\abs{v_1} > 9/10$.
Moreover, from $\abs{\theta} < \delta^2$ we obtain
\begin{align*}
|\, (-A_1\theta + 3B_1 x^2 & + C_1 y^2)v_1 +
(\sigma_1 + 2 C_1 xy)v_2 \,| \geq  \\
& \ge \abs{v_1}[-A_1\abs{\theta} + 3B_1 x^2 + C_1 y^2 -
(\sigma_1 + 2 C_1 \abs{xy})\abs{v_2/v_1}] \\
& \ge (9/10)(-A_1 \delta^2 + 3B_1 x^2 + C_1 y^2 - (\sigma_1 + 2 C_1 \abs{xy})(1/10) ).
\end{align*}
Furthermore points $(x,y) \in \partial R$ have either $\abs{x} = \delta$ or
$\abs{y} = \delta$, then as $B_1 \ge C_1$,
\begin{align*}
\label{e.h4}
|\, (-A_1\theta + 3B_1 x^2 & + C_1 y^2)v_1 +
(\sigma_1 + 2 C_1 xy)v_2 \,| \geq  \\
& \ge (9/10)[-A_1 \delta^2 + C_1 \delta^2 - (\sigma_1 + 2C_1 \delta^2)/10] \\
& \ge (9/10)[-\sigma_1 + 4\sigma_1 - (11/10)\sigma_1] > 2 \sigma_1.
\end{align*}
Then, from \eqref{e.v0}, \eqref{e.v1}, and last estimate, we obtain
$$
\abs{\slope (D\Phi_{a,\theta}(z)\cdot v)} \le \frac{\hat{K}b}{2\sigma_1} < 1/15.
$$
and
$$
\| D\Phi_{a,\theta}(z)\cdot v \| \ge 2\sigma_1 > 2.
$$

We conclude the proof of the remark.
\end{proof}

The dynamics outside $R_0$ is unchanged (so hyperbolic) for every
$\tilde{f}_{a,\theta}$, by $(H_2)$.Thus, it is possible to define $\tilde{f}_{a,\theta}$
in $R_0 \setminus R$ as in condition $(H_4)$.

Our results apply to every family $(f_{a,\theta})_{a,\theta}$ in
a small $C^r$-neighborhood $\mathcal{U}$ of
$(\tilde{f}_{a,\theta})_{a,\theta}$.

%
%

\medskip

Related to this setting of cubic tangencies, in \cite{BDV98,En98},
the authors have constructed codimension-3 submanifolds (respectively,
codimension-2 submanifolds) of the border of the set of Anosov
$C^r$-diffeomorphisms of the torus $\mathbb{T}^2$ (respectively, surface
diffeomophisms with basic set different of all surface) corresponding to
existence of a cubic tangency between the stable and unstable manifolds
of a pair of periodic points.
Roughly speaking, they deform a family of diffeomorphisms in a neighborhood
of a heteroclinic point in order to create a tangency.
During all the process the family remains Anosov.
Those families can be adapted to our context by considering a parameter $a$
defining where the tangency with the vertical direction is being created (in those
context stable directions of periodic saddles correspond to the vertical direction).
See Figure~\ref{f.perturbation}.

Let us point out some distinguishing characteristics of the present setting.
The bifurcation in \cite{BDV98,En98} are of (NT) type, i.e. the lack of hyperbolicity
is due to the presence of a non-transverse intersection of invariant manifolds
of periodic points.
Since the region of perturbation does not contain none of the periodic points $P$ and $Q$,
the arc of $W^u(P)$ (respectively $W^s(Q)$), from $P$ through a neighborhood of $q$
(respectively from $Q$ through a neighborhood of $q$) is always defined and controlled.
When a tangency is created between invariant manifolds of periodic points there is no
recurrence of the tangency to the region of perturbation, opposite to the present
setting\,: we are going to control the creation of a tangency which is recurrent.
Another feature of our setting is that we have to ensure that there exist sufficiently
many branch of stable manifolds close to $q$.
After all, we obtain a tangency between $W^u(P)$ and a stable manifold not associated to a
periodic point.

\begin{rem}
  \label{r.one2two}
The circle maps in \cite{HMS07}, after a convenient reparametrization, have a
local form (see \cite[Remark 2.2]{HMS07}) given by
$$
x \mapsto a + A\theta x + B x^3 + h.o.t.
$$
The reader can realize that the analogy between this formula and that of $(H_3)$
resembles the respective analogy between the quadratic family $x \mapsto 1-ax^2$ and the
H\'enon family $(x,y) \mapsto (1-ax^2 + y, bx)$.
%
\end{rem}



\section{Overview of quadratic and Hénon systems}

As mentioned before our methodology
is based on techniques grounded on works of
Benedicks, Carleson, Mora, and Viana \cite{BC85,BC91,MV93,Vi93}.
In fact we adapt and extend results present in \cite{HMS07}
recreating at some extent the parallels
between the one-dimensional quadratic family \cite{BC85}
and the two-dimensional Hénon \cite{BC91} and Hénon-like \cite{MV93} families.

We refer the reader to the excellent survey of all these classical
arguments present in \cite{LV03} which encompass a
study guide to the original papers.

Although we need to focus on the distinctive features of our setting
contrasting with Hénon-like families it is worthwhile
recall some of the key aspects of the original arguments. We follow
closely \cite{LV03}.

\paragraph{Uniform expansion outside a critical region.}

A basic result which bounces to Mañé's works gives uniform hyperbolicity for (pieces of) orbits
avoiding neighborhoods of critical points and periodic attractors.
First of all we need to define
an appropriate {\em critical region}. These notions are materialized
in Section~\ref{ss:mane}.

\paragraph{Bounded recurrence and non-uniform expansivity.}
An initial set of parameters is fixed and a concept of
``good'' parameters is built upon a careful designed
control of recurrence of some orbits to the critical region.
More precisely, orbits of points in a special set $\CC$
are studied aiming two assumptions which must be satisfied for all $n \geq 1$:

(BA) There is an exponentially decreasing (with $n$) lower bound  for  the
recurrence depth of the $n$-iterate of such points; and

(FA) There is an exponentially decreasing (with $n$) frequency  of recurrence.

Formalizations of these notions in the cubic setting are presented in
Section~(\ref{ss:ind}).

\paragraph{Dynamically defined critical points.}

In the course of extending the one-dimensional arguments,
a highly relevant conceptual point is the
non existence of actual critical points for the families (of diffeomorphisms)
considered.
An original contribution of \cite{BC91} at this point
is to identify tangencies between stable and unstable leaves
as natural substitutes for the notion of critical points.
At the same time it becomes necessary overwhelm several and new complications.
Since the precise formulation
of the critical points needs knowledge a priori of the positions of stable leaves,
assumption clearly not realistic,
the argument has to be settled
introducing the auxiliary notion of {\em finite time critical approximations}.
Furthermore, the recovery
argument after revisits of some neighborhood of the criticalities
must be encompassed this time taking into account
geometric complications introduced by the
two-dimensional scenario and also
the need to satisfy two apparently contradictory forces.
On the one hand, we need to
have a sufficiently rich set of critical approximations from which one can gets
inductive information about growth
of derivatives.
On the other hand, each critical approximation imposes the same parameter
exclusion rules related to the depth and frequency of recurrence as present
in the one-dimensional case. Section (\ref{ss:stabf})
present the related notion of fields of contractive directions
and Section (\ref{ss:cp}) is devoted to discuss the issues
associated to the notion of critical points for families in Section~\ref{s:fam}.

\paragraph{The induction.}

We start with an initial interval  $\Omega_0$ of parameters and
wish to build a positive Lebesgue measure $\Omega = \Omega_\infty$
subset of $\Omega_0$ containing parameters for which

(EG) $\qquad \abs{Df^n(f(z)).(1,0)} \geq e^{cn},\text{ for all } n
\geq 1 \text{ and all } z \in \CC$.

In order to formulate an inductive argument, finite order versions
of $(BA)$,$(FA)$ and $(EG)$ are introduced.
So, $\Omega_n$ stands for a set of parameters whose associated maps $f_a$
are supposed to inductively
satisfy $(BA)_n$, $(FA)_n$ and $(EG)_n$ where these assumptions are
formulated for each one of finitely many {\it critical approximations}
of order $n$ in  a set $\CC_n$.
By excluding some subintervals of $\Omega_n$ if necessary
we achieve a subset $\Omega_{n+1}$ where $(BA)_{n+1}$, $(FA)_{n+1}$ hold for a set $\CC_{n+1}$
of critical approximations of order $n+1$.
Further it is shown that these two last assumptions imply that $(EG)_{n+1}$ also holds,
recovering the induction hypothesis.
To recall a bit more precisely the structure of the arguments,
the sets $\CC_n$ and $\CC_{n+1}$ can be chosen exponentially close in $n$
and so each map $f_a$ with $a \in \Omega_\infty$ is associated to an infinite set
of ``true'' critical points satisfying (EG). See Section~\ref{ss:ind} for more details.

\paragraph{Probability of exclusions.}
As part of the induction argument
it is shown that parameters in the same connected component
$\omega$ of $\Omega_{n-1}$ have
critical orbits indistinguishable up to time $n-1$
implying distortion bounds of derivatives with respect to
phase and parameter spaces which allows
to  estimate
\[
\Leb(\Omega_{n}) \geq (1-\epsilon^n)\Leb(\Omega_{n-1})
\]
where $0< \epsilon < 1$ does not depend on $n$.

\section{Arguments in the cubic setting}

In the following sections we use a series of small constants
which are consistently much larger than $b$.
We use freely the convention of using $C>1$ as a generic large constant
not depending on $b$.
In the same spirit $0 < c < 1$ represents a small constant not depending
on $b$.

\subsection{Induction ingredients}\label{ss:hyp}

\subsubsection{The critical region}\label{ss:mane}

In this section we establish the constant $\delta$ in hypothesis of our
families in Section~\ref{s:fam}.

Recall we define linearizing neighborhoods in Section~\ref{s:fam}.
Let us suppose that $x_2$ is defined at least in the interval $(-1,1)$,
just in order to simplify notation.
Fixed a large integer $N>0$, there exists $\tilde{\delta} > 0$ such that
$\sigma_2^N \tilde{\delta} = 1$.
Note that any rectangle $\tilde{S} = [-\tilde{\delta}, \tilde{\delta}] \times
[-T,T] \subset \mathcal{U}_2$ remains in $\mathcal{U}_2$ for $N$ iterates.

Once and for all we fix $\delta = \tilde{\delta}/(10\sigma_1)$ and recall that $R_\delta =
[-\delta, \delta]^2$.
For $N$ large, $R_\delta \subset \mathrm{int} R_0$ (see $H_2$ for definition of $R_0$).

Now, fix $\eta = \delta^2$ in Section~\ref{s:fam}.
It is easy to see from the local form that
$\tilde{f}_{a,\delta_0} (R_\delta) \subset \tilde{S}$.
So, the assumptions above are compatible with $(H_3)$.

Hence, the appropriated choice of the constants $N, \delta, \eta, A_1, B_1$,
and $C_1$, permit us to construct families of diffeomorphisms satisfying
hypothesis $(H_1)$-$(H_4)$, where $f_{a,\theta}$ preserves the cone of width
$1/10$ and expands their vectors at points not in $R_\delta$.

We call $R_\delta$
the {\em critical region}.
This is a natural choice since the most dramatic effect of bending on
horizontal arcs inside $R_0$ occurs in a roughly vertical curve passing near
$q$.

Thus, the hypothesis stated on the families presented here yield hyperbolic behavior
outside $R_\delta$ as claimed in next lemma.

\begin{lemma}
There is $\sigma_0 > 1$ such that for any family $(f_{a,\theta})$ as above,
for any $z \notin R_\delta$ and every norm-1 vector $v = (v_1,v_2)$ with
$\slope(v) \leq 1/10$, we have
$$
\abs{ \slope Df_{a,\theta}(z)\cdot v } < 1/10
$$
and
$$
\| Df_{a,\theta}(z)\cdot v \| > \sigma_0.
$$
\end{lemma}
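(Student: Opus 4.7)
The plan is to split the argument into two regions according to whether $z$ lies in $R_0 \setminus R_\delta$ or in $M \setminus R_0$, and prove the cone-invariance and expansion separately in each.

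For $z \in R_0 \setminus R_\delta$ there is in fact nothing to do: this is precisely the statement of $(H_4)$, whose quantitative content was already verified in Remark~\ref{r.h4} by a direct computation using the explicit local form $\Phi_{a,\theta}$ from $(H_3)$. That computation yields the slope bound $1/10$ and the uniform expansion factor $\sigma_0 > 1$ on the boundary of $R_\delta$; by extending the same estimate on $R_0\setminus R_\delta$ via the choice of constants $A_1,B_1,C_1$ prescribed in $(H_4)$, this case contributes one value of $\sigma_0$.

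For $z \in M \setminus R_0$ the strategy is to transport the hyperbolicity of $f$ to $\tilde{f}_{a,\theta}$ using $(H_1)$ and $(H_2)$. First, I would fix a smooth frame on a trapping neighborhood of $\Lambda$ that agrees with the linearizing coordinates on $\UU_1$, $\UU_2$, $V$, $\hat V$, and in which the ``horizontal'' direction coincides with the unstable bundle $E^u$ of $f$ and the ``vertical'' direction with the stable bundle $E^s$; this extension is possible because $f$ is Axiom A with strongly dissipative hyperbolic attractor $\Lambda$, hence carries a continuous dominated splitting on its trapping region. In this frame, the horizontal cone of width $1/10$ contains $E^u$ and $Df$ preserves it with uniform expansion $\sigma_f > 1$ at every point of the trapping neighborhood outside $R_\delta$. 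Hypotheses $(H_1)$ and $(H_2)$ make $\tilde{f}_{a,\theta}$ uniformly $C^1$-close to $f$ on $M \setminus R_0$, so the cone invariance and expansion pass to $D\tilde{f}_{a,\theta}$ by standard perturbation of dominated splittings, yielding a possibly smaller but still uniform expansion constant. Taking the minimum of the two constants obtained in the two cases gives the single $\sigma_0 > 1$ claimed.

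The main obstacle, and really the only non-routine point, is the outside region: one must be sure that the number ``$1/10$'' which measures slope via the $(x,y)$-coordinates near $\hat q$ retains a dynamical meaning globally, i.e.\ that the horizontal/vertical frame used in $(H_4)$ extends to a frame in which the unstable/stable cones of $f$ are still approximately horizontal/vertical on the whole trapping region. Once this bookkeeping is in place, the result for the outside region is immediate from uniform hyperbolicity of $f$ plus $C^1$-closeness; the rest is just a matter of ensuring the perturbative losses do not push the slope past $1/10$ or the expansion factor below the $\sigma_0$ provided by Case~$1$, which can be arranged by shrinking the $C^r$-neighborhood $\UU$ in $(H_1)$ if needed.
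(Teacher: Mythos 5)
Your proposal is correct and follows essentially the same route as the paper, which in fact states this lemma without a separate proof, treating it as an immediate consequence of the construction: $(H_4)$ together with Remark~\ref{r.h4} handles $R_0\setminus R_\delta$, while $(H_1)$--$(H_2)$ guarantee the dynamics outside $R_0$ is unchanged (hence hyperbolic), exactly your two-region decomposition. Your additional bookkeeping about extending the horizontal/vertical frame so that ``slope $\leq 1/10$'' keeps its meaning on the whole trapping region is a reasonable filling-in of what the paper leaves implicit.
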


\subsubsection{Fields of contractive directions}\label{ss:stabf}

Again we write $f_{a,\theta}$ as $f$.
The derivative map $Df(z)$ define two orthogonal subspaces $E(z)$
and $F(z)$ on the tangent space $T_zM$ corresponding to the most
contracted and the most expanded ones. This is true under mild
hypothesis on the non-conformality of $Df(z)$ and is particularly
true in our setting.

These direction fields depend smoothly on the point $z$ and are
defined on some neighborhood of $z$ as long as the non-conformality
of the derivative holds. Integrating these fields we get two
orthogonal foliations $\mathcal{E}$ and $\mathcal{F}$. All this
reasoning can be reproduced for $Df^{k}(z)$, $k \ge 1$, if
non-conformality holds. The corresponding sequence of
finite order vector fields $E^{(k)}$ and $F^{(k)}$ as well as the
finite-order foliations
$\mathcal{E}^{(k)}$ and $\mathcal{F}^{(k)}$ can be thought of as
finite-versions of classical stable and unstable bundles and manifolds.
See \cite[Section 2.3]{LV03} for useful comments on this subject.

Let $1 < n < \nu$ and suppose the contractive fields
$E^{(n)}$ and $E^{(\nu)}$
are defined in an open set $U \subset M$.
These fields are almost constant and are exponentially  close
in $n$. These and other important facts are collected
in the next lemma. We write $\wzero = (1,0)$ and
given $\lambda > 0$ we say that a point $z=(x,y)$ is $\lambda$-expanding
up to time $n \geq 1$ if
\[
  \norm{Df^j(z)\wzero} \geq \lambda^j, \qquad \text{for all } 1 \leq  j \leq n.
\]

\begin{lemma}[Contractive fields] \label{lem:cd}
There exists $\tau > 0$ sufficiently small such that if
 $\hat{z}$ is $\lambda$-expanding up to time $n \ge 1$, for some $\lambda \gg b$ and $\xi$
satisfying $\dist(f^j(\xi),f^j(\hat{z})) < \tau^j$ for every $0\le j \le n-1$
then, for any point $z$ in the $\tau^n$-neighborhood of $\xi$ and
for every $1\le \ell \le k \le n$,
\begin{enumerate}
  \item $E^{(k)}(z)$ is uniquely defined and nearly
  vertical: $\abs{\slope (E^{(k)}(z))} \ge c/\sqrt{b}$;
  \item $\ang (E^{(\ell)}(z),E^{(k)}(z)) \le
  (Cb)^\ell$ and $\|Df^\ell(z)E^{(k)}(z)\| \le
  (C\sqrt{b})^\ell$;
  \item $\| D_* E^{(k)}(z)\| \le C \sqrt{b}$ and
  $\|D_*^2E^{(k)}(z)\| \le C\sqrt{b}$;
  \item $\| D_*(Df^\ell E^{(k)}(z))\| \le (Cb)^\ell$;
  \item $1/10 \le \|Df^n(\xi)\wzero\|\, / \, \|Df^n(z)\wzero \|
  \le 10$;
  \item $\ang(Df^n(\xi)\wzero,Df^n(x)\wzero) \le
  (\sqrt{C\tau})^n$.
\end{enumerate}
where $D_*$ stands indistinctly for derivatives with respect to $z,a$ or $\theta$.
\end{lemma}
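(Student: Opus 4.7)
The plan is to prove the six properties by induction on $k$, following the template of the contractive-field lemmas in \cite{BC91,MV93} but adapted to account for the cubic (rather than quadratic) nature of the degeneracy in our families. First I would establish the $k=1$ case of item (1) from strong dissipativeness: since $|\det Df|$ is of order $b$, the two singular values of $Df(z)$ differ by a factor of order $b$, the most contracted eigendirection is uniquely defined, and the explicit local form $(H_3)$ gives $|\slope E^{(1)}(z)|\ge c/\sqrt b$. Differentiating the $2\times 2$ eigendirection formula with respect to $z,a,\theta$ yields item (3) at level $k=1$.

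For the inductive step I would define a candidate $\tilde E(z) = Df(z)^{-1}\, E^{(k)}(f(z))$ (normalised) and show that it differs from the true most-contracted direction of $Df^{k+1}(z)$ by at most $(Cb)^{k+1}$, via the standard graph-transform/fixed-point argument on the projective bundle. The crucial ingredient is that the $\lambda$-expansion along the orbit of $\hat z$ propagates first to $\xi$ (by the shadowing hypothesis $\dist(f^j(\xi),f^j(\hat z))<\tau^j$) and then to $z$ (by $\dist(z,\xi)<\tau^n$): for $\tau$ chosen small enough, $\|Df^j(z)\wzero\|$ remains comparable to $\lambda^j$ for all $j\le n$, so the unstable cone of $Df^{k+1}(z)$ dominates the stable cone and the graph transform is a strict contraction. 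This simultaneously gives items (1) and (3) at level $k+1$.

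Items (2) and (4) will follow from the exponential stability of the contractive-field sequence: writing $E^{(k)} = E^{(\ell)} + \Delta$, the recursion forces $\|\Delta\|\le (Cb)^\ell$ because each additional iterate contributes a factor of order $\sqrt b$ to the vertical contraction, and differentiating the same recursion in $z,a,\theta$ and iterating gives $\|D_*(Df^\ell E^{(k)})\|\le (Cb)^\ell$. For items (5) and (6) I would run a telescoping distortion estimate along the three orbits of $\hat z$, $\xi$ and $z$: because $\lambda\gg b$ the vectors $Df^j(\hat z)\wzero$ stay inside a narrow horizontal cone; the same is true along $\xi$ and $z$ up to errors shrinking like $\tau^j$, and summing the pointwise log-differences yields the ratio bound $1/10\le \|Df^n(\xi)\wzero\|/\|Df^n(z)\wzero\|\le 10$ together with the angular estimate $\ang(Df^n(\xi)\wzero,Df^n(z)\wzero)\le (\sqrt{C\tau})^n$.

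The main obstacle will be uniformity over $z$ in the $\tau^n$-neighborhood of $\xi$. Because our tangency is cubic, horizontals bend faster inside $R_\delta$ than in the quadratic H\'enon case, so one cannot afford deep visits to the critical region during the first $n$ iterates. The $\lambda$-expansion hypothesis on $\hat z$ controls this along the reference orbit, and $\tau$ must be fixed small enough that the $\tau^j$-neighborhoods of $\{f^j(\hat z)\}$ never enter $R_\delta$ at a depth the accumulated $\lambda^j$-expansion cannot absorb. Once $\tau$ is so chosen the induction closes and all six estimates hold simultaneously.
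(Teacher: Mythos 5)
Your proposal correctly reconstructs the standard contractive-field arguments of Benedicks--Carleson and Mora--Viana, which is exactly what the paper does here: its proof of this lemma consists solely of a citation to \cite[Sec.~5.3]{BC91}, \cite[Sec.~7C]{MV93}, \cite[Sec.~2.1]{BeV01}, \cite[Sec.~2.3]{LV03}, with no further detail. Your inductive construction of $E^{(k+1)}$ by pulling back $E^{(k)}$ under $Df^{-1}$ and running a graph-transform/fixed-point argument on the projective bundle, plus the telescoping distortion estimates based on the $\lambda$-expansion and shadowing hypothesis, is precisely the method of those references, restated for the cubic local form.
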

\begin{proof}
Analogous to \cite[Section 5.3]{BC91},
\cite[Section 7C]{MV93}. See also
\cite[Section 2.1]{BeV01}, \cite[Section 2.3]{LV03} .
\end{proof}

\begin{rem}\label{rem:N}
For future reference, let us point out that
we can assume trivially constructed
contractive directions in the whole of $f(R)$
of all orders up to some large positive integer $N$:
every $z \in f(R)$
satisfies $\norm{Df^j(z)\cdot w_0} \ge \sigma_2^j$ for all $j \le N$.
Indeed this is related to the interval of time while the orbits of points in $R$
remains near the fixed point $Q$ and we can make $N$ as large as we want
taking the interval of $a$-parameters $[-\eta,\eta]$ sufficiently small.
Furthermore in the coordinate system introduced in Section \ref{s:fam}
these fields coincide with the vertical Euclidean foliation.
\end{rem}

\subsubsection{Critical points}\label{ss:cp}

We use the expression {\bf almost flat curve} to refer to the image of a
para\-me\-tri\-za\-tion $x \mapsto (x_0+x,y_0+y(x))$ with
$y,y',y''$ of $\mathcal{O}(\sqrt(b))$.


Let $\tilde{z}$ be the point in $W^u(P) \cap \partial R$ closest to $q$
in $W^u(P)$. We define
$G_0 = $  arc  $[P,\tilde{z}]$ in $W^u(P)$ and proceed by induction:
once defined $G_{n-1}$ we put $G_n = f(G_{n-1}) \setminus G_{n-1}$.

The set $G_n$ is called the {\bf arc of generation} $n$.

Given an almost flat curve $\gamma$ in $R$ we write $t(z)$ for the unit tangent
vector to the curve $\gamma$ in the point $z$.
Naturally there exists some $\theta'$ such that if $\theta < \theta'$
then there exists an unique point $z$ which minimizes
\begin{equation}\label{eq:ang}
    \measuredangle\big(t(f(z)), E^{(1)}(f(z))\big).
\end{equation}

This is an easy consequence of the cubic nature of
the definition of our families (see Section~\ref{s:fam}).

The same reasoning shows that under similar conditions there exists in $G_0$
an unique critical approximation $z_0^{(j)}$
of order $j$ for each $j$ from 1 to $N$.
Furthermore in view of Remark \ref{rem:N} we know that $z_0^{(j)}=\hat{q}$
for $j \leq N$.

Let us fix $\rho \gg b$ a small positive number and denote by $\gamma(z,\rho)$ the
$\rho$-neighborhood of $z$ in $W^u(P)$.
Suppose we have already defined critical approximations $z^{(1)},\ldots,z^{(n-1)}$
in an arc $\gamma$ of $W^u(P)$ of length at least $\rho^n$.
If in a neighborhood of $f(\gamma)$ we can define the contractive field $E^{(n)}$
then we can formulate the problem of minimizing an expression similar to (\ref{eq:ang}).
If this problem have an unique solution, we can define $z^{(n)}$.
This effectively works since from Lemma \ref{lem:cd} we know that
the angle between $E^{(n)}$ and $E^{(n-1)}$ is at most $(Cb)^n$
(and $b^n \ll \rho^n$).

When this process can be repeated for all $n \geq 1$ we will eventually define a
{\bf limit critical point} $z_\infty$.

It can be deduced from Lemma \ref{lem:cd} a natural algorithm to induce
critical approximations of arbitrary generation from
lower generations ones.
Let us consider  $\gamma_1=\gamma(z_1,\ell)$ and $\gamma_2=\gamma(z_2,\ell)$ arcs
of $W^u(P)$ with $\ell \geq \rho^n$.
We assume that $z_1$  is a  critical approximation of order $n$. Hence if
$\dist(z_1,z_2) \ll \rho^n$ then it is easy to see that $\gamma_2$ also contains a
critical approximation of order $n$.


Notwithstanding the choice of minimizing angles between contracting foliations
and unstable manifolds is nothing but a natural one, we have to deal with the
lackness of meaning of this notion when $\theta$ is very small.
Recall that Lemma~\ref{lem:cd} implies that whenever the contractive directions
do exist they converge exponentially fast, even with respect to parameters,
whereas our local form promotes a fast bending of the unstable manifold while
varying $\theta$. So we have to design careful ways of detecting, or preventing,
configurations like that shown in Figure~\ref{f.bad}.
This issues will be addressed in Subsection~\ref{ss:tang}.

\begin{figure}[phtb]
\centering
\psfrag{R}{$R$}
\psfrag{r}{$\hat{q}$}
\psfrag{l}{$\Phi_{a,\theta}$}
\psfrag{q}{$q+(a,0)$}
\includegraphics[height=4cm]{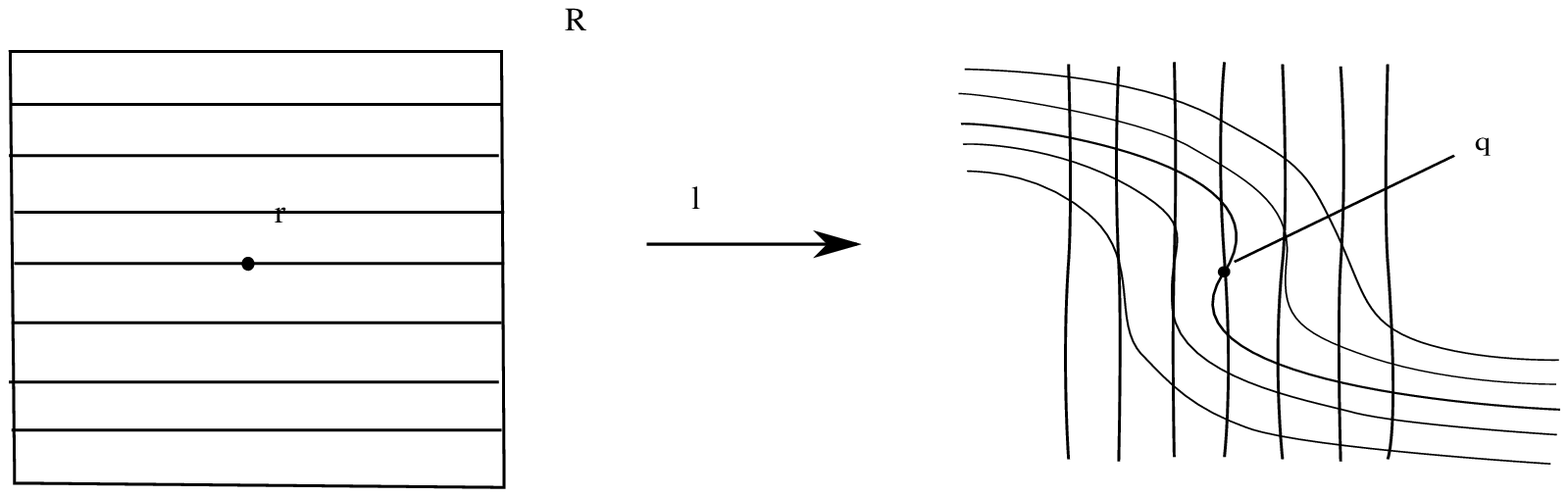}
\caption{Bad situation} \label{f.bad}
\end{figure}


\subsubsection{Induction procedure in the cubic setting}\label{ss:ind}

We are going to outline some aspects of the induction procedure which are
relevant in our context.
A substantial part of the arguments in the original works are based solely
in the strong dissipativeness of the system and are straightforward
applicable to this setting.

As a matter of notation we write $\wzero(z_0) = \wzero$ and

\[
\wn(z_0) = Df^n(f(z_0))\cdot\wzero(z_0).
\]

Let us suppose we have defined for $1 \leq k \leq n$ finite sets $\CC_k$
of critical approximations of order $k$.
We collect here some facts that are assumed to be true
for each
$z_0 \in \CC_k$ for a given $k$.

The generation $g$ of $z_0$ is much smaller than $k$  and
$z_0$ is the center of an almost flat piece of $W^u(P) \cap R_\delta$
of length at least $2\rho^g$.

We have exponential growth of derivatives:

\bigskip
\noindent $\textrm{(EG)}_k \hspace{4cm} \norm{\wj(z_0)} \geq e^{c j}, $ \;\;
$0 \leq j < k$.
\bigskip

The orbit of $z_0$ is divided into pieces as follows.
Given $0 \leq \nu \leq k$ if $z_\nu=f^\nu(z_0)$ is inside $R_\delta$
we say that $\nu$ is a {\bf return } time for $z_0$.
For every such return occurring up to time $k$ we have an associated
element $\xi=\xi(z_\nu)$ of $\CC_k$ which is called the {\bf bind critical
point} of $z_\nu$. We define
\[
 d_\nu(z_0) = \norm{z_\nu - \xi}
\]
and assume that

\bigskip
\noindent $\textrm{(BA)}_\nu \hspace{4cm} d_\nu(z_0) > e^{-\alpha \nu}$
\bigskip

\noindent holds and also
\begin{equation}
\label{eq:tp}
\abs{ \slope(\bfw_{\nu-1}) - \slope(t(\xi)) } \ll d_\nu(z_0).
\end{equation}

Following a return $\nu$ there are $p$ iterates called the {\bf bound period}
of $z_\nu$ where $p$ is defined in order to maximally satisfy (here $h$ is a
constant fixed a priori):
\[
\dist(z_{\nu+j},\xi_j) \le h e^{-\beta j},\qquad 0 \leq j \leq p.
\]

Each bound period starts with a segment called {\bf folding period} with size
$\ell$ satisfying
\begin{equation}\label{eq:fold}
 \ell \approx \frac{C}{\log(1/b)}\log(1/d_\nu(z_0)^2) \ll p .
\end{equation}

Each
iterate $z_j$ which is not part of a folding period is said to be a
{\bf fold free} iterate. If $z_j$ is fold free then
\[
  \abs{\slope(\wjp)} \leq C\sqrt{b}.
\]

Returns can occur before the end of the bound period of previous returns but
bound periods are nested: if $\nu_1$ and $\nu_2$ are successive returns whose
bound periods have lengths of $p_1$ and $p_2$ iterates respectively and if
$\nu_1 + p_1 \geq \nu_2$ then $\nu_2 + p_2 < \nu_1+p_1$.
Returns occurring outside any bound periods are referred to as {\bf free
returns}.

If $0 < \nu_1 < \nu_2 < \cdots < \nu_m \leq k$ are the free returns of
$z_0$ up to time $k$ and $p_j$ is the length of the bound period associated
to the return $\nu_j$ then

\bigskip
\noindent $\textrm{(FA)}_k$ \hspace{4cm}
 $FA(z_0,k) = \displaystyle{\sum_1^m p_j \leq \alpha k}$.
\bigskip

Let $\gamma: s \mapsto (x_0 + s,y_0 + y(s))$ be an almost flat curve in
the critical region whose image under $f$ is contained in an open set where
the contractive field  of order $k$ is defined.
A tool which is an important part
of the arguments is the splitting algorithm
which we describe now in an informal way.

While defining the notion of critical approximation of some finite order
we considered the action of our dynamical system over
almost horizontal arcs in the critical region.
Strong dissipativiness of the system and hyperbolicity outside the critical region
yields the important geometric fact that the iterates of $\wzero$ return almost
horizontal.
But if the parameter $\theta$ is very small and $n$ is a return time for $z_0$
then $\wn(z_0)$ can be too close to the contractive direction.
To estimate the loss of growth we split this vector in the horizontal direction
and the direction of the contractive foliation.
The magnitude of the horizontal component is related to the distance $d=d_n(z_0)$
of the return with respect to the binding critical point.
In the Hénon-like case this component is of magnitude $d^2$.
%
%
%
We now proceed to investigate these magnitudes in our context. Recall that we have
\begin{align}
\Phi_{a,\theta}(x,y) =
\left[
\begin{array}{c}
a +  \sigma_1 y + x(- A_1 \theta + B_1 x^2 + C_1 y^2)\\
- b x + y(- A_2 \theta + B_2 x^2 + C_2 y^2) \\
\end{array}
\right]
\end{align}
and
\begin{align}
D\Phi_{a,\theta}(x,y) =
\left[
\begin{array}{cc}
A(x,y) & B(x,y)\\
C(x,y) & D(x,y)
\end{array}
\right],
\end{align}
with
\begin{align*}
A(x,y) & = -A_1\theta + 3 B_1x^2 + C_1 y^2 , \\
B(x,y) & = \sigma_1 + 2C_1xy ,\\
C(x,y) & = b + 2B_2xy , \text{ and} \\
D(x,y) & = -A_2\theta + B_2x^2 + C_2 y^2.
\end{align*}

Writing $A(s) = A(\gamma(s))$ and similar expressions for $B,C$,
and $D$, we have
\begin{align*}
A'(s) & = 6 B_1 s + 2 C_1 yy', \\
B'(s) & = 2C_1y + 2C_1 sy' = 2C_1(y+sy'), \\
C'(s) & = 2B_2(y + sy'), \\
D'(s) & = 2B_2s + 6C_2 yy',\\
A''(s) & = 6B_1 + 2C_1((y')^2+yy'), \\
B''(s) & = 2C_1y' + 2C_1(y'+sy'') = 2C_1(2y'+sy''), \\
C''(s) & = 2B_2(2y'+sy''), \text{ and} \\
D''(s) & = 2B_2 + 6C_2((y')^2+yy').
\end{align*}

Suppose in a neighborhood of $f(\gamma)$ the existence of the contractive
direction of order $n$ and consider the field $\ve(s) = \ve^{(n)}(s) =
(q(s),1)$, collinear with that direction.
We can suppose $\abs{q},\abs{q'},\abs{q''}$ bounded by $C\sqrt{b}$.
Writing $\vt(s) = Df(\gamma(s))\cdot\gamma'(s)$ it follows that
\begin{equation} \label{e.ts2}
\vt(s) = \alpha(s) \ve(s) + \beta(s) \wzero.
\end{equation}
So, the previous estimates imply (the prime means the derivative
with  respect to $s$)
\begin{align*}
\alpha & = C + Dy, \\
\alpha' & = C'+D'y+D''y, \text{ and}\\
\alpha'' & = C'' + D''y +D'y'+D'y' + Dy''.
\end{align*}
It is immediate that $|\alpha|,
|\alpha'|$, and $|\alpha''|$ are all bounded above by $C\sqrt{b}$.
We also have
\[
\beta  = A + By - \alpha q \\
\]
and so
$$
\abs { \beta'(s)  - 6 C_1 s } \leq  C\sqrt{b} \qquad \text{ and }  \qquad
\abs { \beta''(s) - 6 C_1 } \leq  C\sqrt{b}.
$$

Now suppose that  $\gamma(0)$ is a critical point of order $m \geq k$.
Let $\beta$ be  obtained as before from the splitting with
respect to $k$-contractive directions and let us consider also $\tilde\beta$
as the corresponding function while splitting with respect to $m$-contractive
directions.
It is an easy consequence of Lemma~\ref{lem:cd} that
\begin{equation}\label{eq:beta0}
 \abs{ \beta'(0) } \approx (Cb)^m \quad \text{and} \quad \tilde\beta'(0) = 0.
\end{equation}
Also, in view of (\ref{eq:fold}) we get
\[
 (Cb)^m \leq d_n(z)^2
\]
and hence
\begin{equation}\label{eq:mybeta}
 \abs{\beta(s) - \beta(0)} \approx K_1 d_n(z)^2,
\end{equation}
for some $K_1 > 1$ large.

\begin{lemma}\label{lem:boundp}
Let $n$ be a return for $z=z_0 \in \CC_k$, with $n \leq k$, and let $p$
be the length of the corresponding bound period.
Let $\xi$ be the associated binding critical point and $d = \dist(z_n,\xi)$.
Then
\begin{itemize}
 \item[(a)] $p \approx \log(1/d) $
 \item[(b)] $\norm{\text{\bf w}_{n+p}(z_0)} \geq e^{\tilde{c}(p+1)} \norm{\text{\bf w}_{n-1}(z_0)}$
\quad for some fixed $\; \tilde{c} > 1$.
\end{itemize}
\end{lemma}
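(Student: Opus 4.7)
The plan is to follow the bound-period strategy standard in H\'enon-like analysis (Benedicks--Carleson, Mora--Viana), adapting the constants to the cubic setting of Section~\ref{s:fam}.

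For part (a), I would argue directly from the definition of the bound period. Since $\xi$ is a critical approximation of sufficiently high order (as constructed in Section~\ref{ss:cp}), $Df(\xi)\wzero$ is essentially in the contracting direction; hence the image under $f$ of the horizontal displacement $z_n-\xi\approx d\,\wzero$ (using (\ref{eq:tp})) is dominated by the cubic term in the Taylor expansion of $f$ at $\xi$, giving $|z_{n+1}-\xi_1|\approx B_1\,d^3$. The shadow $(z_{n+j})$ of $(\xi_j)$ then expands this initial displacement by $\|\bfw_{j-1}(\xi_1)\|\ge e^{c(j-1)}$, using $(EG)_k$ for $\xi$ together with the distortion and angle control of Lemma~\ref{lem:cd}. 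The bound period therefore terminates when the separation first exceeds $h\,e^{-\beta p}$, which gives $d^3\,e^{cp}\approx e^{-\beta p}$ and hence $p\approx 3\log(1/d)/(c+\beta)\approx\log(1/d)$.

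For part (b), I would combine a loss estimate at the return with a gain estimate during the bound period. By (\ref{eq:tp}) the vector $\wnp$ is very nearly tangent to $W^u$ at $z_n$, so the splitting (\ref{e.ts2}) together with the cubic estimate (\ref{eq:mybeta}) gives
\[
 \wn(z_0)=\alpha\,\ve(z_n)+\beta\,\wzero,\qquad |\alpha|\lesssim\sqrt{b},\quad |\beta|\approx K_1\,d^2.
\]
Condition $(BA)_n$ guarantees $d^2\gg\sqrt{b}$ for the surviving parameters, so the horizontal component dominates and $\|\wn\|\ge K^{-1}\,d^2\,\|\wnp\|$. During the subsequent $p$ iterates the orbit shadows $\xi_1,\ldots,\xi_p$; the contracting part of $\wn$ decays, while the horizontal part is transported by $Df^p$ along a nearly-horizontal orbit, and Lemma~\ref{lem:cd} combined with the standard distortion argument yields
\[
 \|\bfw_{n+p}(z_0)\|\ge K^{-2}\,e^{cp}\,d^2\,\|\wnp(z_0)\|.
\]
Substituting $d\ge e^{-(c+\beta)p/3}$ from (a) gives net growth of order $e^{(c/3-2\beta/3)(p+1)}\,\|\wnp\|$. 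Since $\beta$ is a small constant and $c$ can be arranged (by sharpening the initial hyperbolicity of $f$ in Section~\ref{ss:mane}) to satisfy $c/3>1$, one obtains the required $\tilde{c}>1$.

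The main obstacle will be the narrow budget of constants: the $d^2$ loss at the return, dictated by the cubic nature of the tangency (as opposed to the $d$-loss in H\'enon-like systems), leaves roughly a factor $c/3$ of exponential growth per bound-period iterate, so one must have arranged $c>3$ in the earlier steps of the construction. The distortion bounds entering the shadowing step also require sharper control in the cubic setting, because the derivatives of $\beta$ are comparable to $d$ rather than to $1$; this is the place where the refined versions of Lemma~\ref{lem:cd} and the critical-point formalism of Section~\ref{ss:cp} pay off.
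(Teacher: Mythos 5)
Your overall strategy --- shadowing the binding orbit, splitting along the contractive field, using distortion control and the inductive exponential growth $\norm{\bfw_j(0)}\ge e^{cj}$ --- is exactly the paper's. However, a central structural ingredient is missing, and it changes both parts of the calculation.

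You write that ``$Df(\xi)\wzero$ is essentially in the contracting direction,'' and deduce $|z_{n+1}-\xi_1|\approx B_1 d^3$. But the whole point of the correction procedure in Section~\ref{ss:tang} is that the critical approximations are \emph{not} genuine tangencies: the quantity $\widehat\Theta=\beta(0)$ (the angle offset encoded in the splitting along $\gamma$) is kept strictly positive, of order $\varrho^m$, for every surviving parameter. Consequently $\beta(s)\approx\widehat\Theta+K_1 s^2$ and the initial horizontal separation is $\approx\widehat\Theta\,d+K_2 d^3$, not $\approx d^3$; for small $d$ the \emph{linear} term dominates. The paper's key two-sided inequality is
\[
 e^{-\beta(p+1)}e^{-c}\le e^{cp}(\widehat\Theta\,d+K_2 d^3)\le e^{-\beta p},
\]
and the positivity of $\widehat\Theta$ is invoked explicitly to extract part (a) from it. Your version of (a), $p\approx 3\log(1/d)/(c+\beta)$, would only be the right leading order when $\widehat\Theta d\ll K_2 d^3$, i.e.\ when $d^2\gg\widehat\Theta$, which the construction does not guarantee.

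Correspondingly, in part (b) the loss at the return is not $K_1 d^2$ but $|\beta(\hat s)|\approx\widehat\Theta+K_1 d^2\ge\widehat\Theta$, i.e.\ it is bounded \emph{below} by a positive constant independent of $d$. This is precisely what relieves the constant budget you worry about: the paper derives (b) by combining the displayed inequality with the splitting estimate $\norm{Df^{p+1}(\gamma(\hat s))\cdot\gamma'(\hat s)}\ge|\beta(\hat s)|\,\norm{\bfw_p(0)}-C\sqrt b(Cb)^p$ and a distortion bound of the form $\norm{\bfw_{n+p}}/\norm{\bfw_{n-1}}\ge(1+O(\sqrt b))\norm{Df^{p+1}(\gamma(\hat s))\cdot\gamma'(\hat s)}$. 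Your proposal to ``arrange $c>3$ by sharpening the initial hyperbolicity'' is not the route the paper takes, and it is in tension with hypotheses $(H_1)$--$(H_4)$ (the expansion rate $\sigma_0$ outside $R_\delta$ is a fixed feature of the family, not a tunable parameter). In short: incorporate $\widehat\Theta>0$ into the splitting (use \eqref{eq:mybeta} and the subsequent identities), and the cubic-setting budget closes without asking for an unrealistic hyperbolicity constant.
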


\begin{proof}
These facts are derived much in the same way as in
\cite[Section~7.4]{BC91} and \cite[Section~10]{MV93}.
We outline some steps where estimates are affected by the cubic setting.

We suppose that there is an almost flat curve $\gamma$ as above with $\gamma(0)=\xi$.
For some $\hat{s} \approx d$ we have $\gamma(\hat{s})=z_n$ and $\gamma'(\hat{s})$
is collinear with $\wnp(z_0)$ (and almost unitary).
In the sequel we write $\wj(s) = \wj(\gamma(s))$, for $j \leq n$.

In order to show that in the bound period we have a kind of distortion
bound which permits deriving growth of derivatives $\wj(s)$ from the
induction hypothesis of growth of $\wj(0)$, first one shows that it is
possible to write
\begin{equation}\label{eq:mv}
 \wj(s) = \lambda(s)(\wj(0) + \epsilon(s)),\quad c \leq \lambda(s) \leq C
\quad\text{and}\quad
\norm{\epsilon(s)} \ll \norm{\wj(s)}
\end{equation}
from which we get
\[
 \norm{ \wj(s) } \approx \norm{ \wj(0) } \geq e^{cj} \quad(\text{by induction}).
\]


As in (\ref{e.ts2}) we use the contractive field $\text{\bf e} = \text{\bf e}^{(p)}$
of order $p$, the binding period associated to this return, and write

\begin{equation} \label{e.ts3}
\vt(s) = Df(\gamma(s))\cdot\gamma'(s) = \alpha(s) \ve(s) + \beta(s) \wzero.
\end{equation}

Hence we can estimate how much a point $\gamma(s)$ gets far from its binding
point $\xi=\gamma(0)$ in the next iterates by writing
\[
 f^{j+1}(\gamma(s)) - f^{j+1}(\gamma(0)) = \int_0^s Df^j(f(\gamma(s)))\cdot\vt(s)\; ds
\]
and using (\ref{e.ts3}) we get
\[
 \alpha(s)\ve_j(s) + \beta(s)\wj(s) =
 \alpha(s)\ve_j(s) + (\beta(s)-\beta(0))\wj(s) + \beta(0)\wj(s),
\]
with subscripts $j$ meaning the obvious iterate under action of $Df$.

In particular, for $j=p$, since $\ve(s)$ is exponentially contracted for $p$
iterates we know that the integrand in the first term on
the right hand side is of magnitude less than $(Cb)^p$.
%
From (\ref{eq:mybeta}) and writing $\widehat\Theta = \beta(0)$ we get the
estimate
\[
e^{-\beta p} \approx  \dist( z_{n+p} , \xi_p) \approx \norm{ \wpp(0) }
(\widehat\Theta d + K_2d^3),
\]
with $K_2 = (1/3)K_1$.
Taking into account the definition of $p$ and that
$\norm{ \wpp(0) } \geq e^{cp}$ we get
\begin{equation}\label{eq:ecp1}
 e^{-\beta (p+1)} e^{-c} \leq e^{cp} (\widehat\Theta d + K_2d^3) \leq e^{-\beta p}.
\end{equation}

\begin{rem}
Note that more rigorously we must write $\widehat\Theta = \beta(\tilde{s})$ where
$\beta'(\tilde{s}) = 0$, but since $\abs{\tilde{s}} \leq (Cb)^p$ we have
$\abs{\beta(0)} \approx \abs{\beta(\tilde{s})}$ and $d \approx s \approx \tilde{s}$.
\end{rem}

We now use the fact that $\widehat\Theta > 0$ along all our construction, as will
be explained in Section~\ref{ss:tang}. The last result gives easily item (a).
Furthermore  the second inequality in (\ref{eq:ecp1}) gives
\[
 e^{cp} K_2 d^3 \leq e^{-\beta p}
\]
which implies
\[
 \frac{1}{d} \geq e^{\frac{1}{3}(c + \beta) p} K_2 ^{\frac{1}{3}}.
\]
From here and the first inequality in (\ref{eq:ecp1}) we get
\begin{equation}\label{eqp0}
e^{cp} (\widehat\Theta  + K_1 d^2) \geq
 e^{-\beta (p+1)} e^{-c} e^{\frac{1}{3}(c + \beta) p} K_2 ^{\frac{1}{3}}.
\end{equation}
On the other hand note that
\begin{equation}\label{eqp1}
{ \norm{ \bfw_{n+p}(z_0) } \over  \norm{ \bfw_{n-1}(z_0) } }
\geq (1+C\sqrt{b})\norm{ Df^{p+1}(\gamma(\hat s) )\cdot \gamma'(\hat s) }
\end{equation}
and exploring again (\ref{e.ts3}) we get
\begin{equation}\label{eqp2}
\norm{ Df^{p+1}(\gamma(\hat s) )\cdot \gamma'(\hat s) }
\geq
\abs{\beta(0)} \norm{\bfw_p(0)} - C\sqrt{b}(Cb)^p
\end{equation}
and we also have
\[
 \abs{ \beta(0) } \approx \widehat\Theta  + K_1d^2
\qquad \text{and} \qquad
 \norm{ \bfw_p(0) } \geq e^{cp}.
\]
Combining (\ref{eqp0}),(\ref{eqp1}) and (\ref{eqp2}) we get
immediately item (b).
\end{proof}

\begin{rem}
We can derive exponential growth of $\bfw_k(z_0)$ by observing that
\[
 \norm{\bfw_k(z_0)} =
\prod_{i=1}^{k} { \norm{\bfw_i(z_0)} \over \norm{\bfw_{i-1}(z_0) } }.
\]
For each $n \leq k$ a (free) return time with corresponding bound period $p$ we have
\[
\prod_{i=n}^{n+p} { \norm{\bfw_i(z_0)} \over \norm{\bfw_{i-1}(z_0) } }
=
{ \norm{\bfw_{n+p}(z_0)} \over \norm{\bfw_{n-1}(z_0) } } \geq 1
\]
according the previous lemma and
\[
\prod_{i=n+p+1}^{\nu} { \norm{\bfw_i(z_0)} \over \norm{\bfw_{i-1}(z_0) } }
=
{ \norm{\bfw_{\nu}(z_0)} \over \norm{\bfw_{n+p}(z_0) } } \geq e^{c\mu}
\]
where $\nu$ is the next free return after $n$ and $\mu = \nu - (n + p + 1)$
and this last estimate follows from the results in Section~\ref{ss:mane}.
Those parameters satisfying the induction hypothesis in particular
obey $(FA)_k$ and this gives the expected growth.
\end{rem}

\subsection{Creation of tangencies in a controlled  way}\label{ss:tang}

We write $\mathcal{R} = [-\eta,\eta] \times [-1,\epsilon]$
for the $(a,\theta)$-parameter space.
Recall that in the one dimensional case (see \cite{HMS07})
we deal with curves at the parameter space which can be
described by maps $a \mapsto (a,\tilde{\theta}) \in \mathcal{R}$, for
$\tilde{\theta}$ fixed, and apply the {\em exclusion parameter
arguments} (which we will denote as \textsf{EPA} from now on)
to them.
Here we generalize this notion.

\begin{definition}\label{def:tfcurve}
A {\em $\theta-$flat curve} is the graph in $\mathcal{R}$
of a smooth map $[-\eta,\eta] \to [-1,\epsilon]$,
which
has all derivatives up to order 3 bounded by $C\sqrt{b}$.
\end{definition}

Note that it is possible to extend easily the arguments
and apply \textsf{EPA} to \tf curves.

We also want to introduce a notion which will indicates how far we are from
``forming tangencies''. Let $\Upsilon$ be a \tf curve as above.
While applying \textsf{EPA} to this curve, let
$(a,\theta) \in \Upsilon$ be a parameter that has not been excluded
up to time $j$.
In the context of the arguments we are dealing with among several facts
this means that there exists a finite
set $\mathcal{C}_j$ of critical approximations of order $j$ associated to the map $f_{a,\theta}$
and each one of these critical approximations lies on a sufficiently large
and flat arc of $W^u(P_{a,\theta})$.
Moreover around the images of these arcs
there are well defined maximal contractive directions of order $j$.
Since the critical approximations are defined intrinsically
as minimizing the angles between $W^u$ and the contractive
foliations (see (\ref{eq:ang})),
we can define
\[
\measuredangle_j(a,\theta) = \min \{ \measuredangle(t(\xi), \Gamma^j(f(\xi))\;; \xi \in \mathcal{C}_j \}.
\]
where $\Gamma^j(\cdot)$ is the leave
of the contractive foliation passing through the specified point.


We fix a small number $\varrho$, but satisfying $\varrho \gg b$.
Given $m > 1$ it follows from Remark~\ref{rem:N} and Section~\ref{ss:cp}
that for each $1 \leq j \leq N$ there exists a \tf curve
$\Upsilon_j^m$ in $\mathcal{R}$ satisfying,
\[
\measuredangle_j(a,\theta) = \varrho^m \qquad \text{ for } (a,\theta) \in \Upsilon_j^m.
\]

\begin{rem} In fact $\theta$ is constant over $\Upsilon_j^m$, for $j \leq N$.
\end{rem}

\begin{rem}
While discussing the splitting algorithm in Section~\ref{ss:ind}
in a number of places we have assumed that the minimum value of $\beta(\cdot)$
along an almost flat curve passing through a critical point (of finite order)
was attained at that point with an associated value denoted by $\widehat\Theta$.
The context presented now justify why we could assume $\widehat\Theta > 0$.
\end{rem}

Note that if $(a,\theta)$ was not excluded up to time $j$ then for each $(\tilde{a},\tilde{\theta})$
sufficiently close we also have
the $j$-contractive field defined and the
local form (see Section~\ref{s:fam}) gives
\begin{equation}\label{eq:varang}
\abs{ \measuredangle_j(a,\theta) - \measuredangle_j(\tilde{a},\tilde{\theta})}
\leq c_1\sqrt{b}\abs{ a - \tilde{a}} + C_2\abs{ \theta - \tilde{\theta} }.
\end{equation}
In particular we get a family of \tf curves
\[
\{ \Upsilon_N^m \}_{m\geq 1}
\]
in $\mathcal{R}$ satisfying, $\measuredangle_j = \varrho^m$.

Let us introduce some notations. While applying \textsf{EPA} to
$\Upsilon_j^m$ we get a collection
$\Omega_k = \Omega_k(j,m)$ of subsets of $\Upsilon_j^m$ satisfying
\[
\Upsilon_j^m = \Omega_0 \supset \Omega_1 \supset \Omega_2 \cdots \supset \Omega_k \supset \cdots
\]
Each $\Omega_k$ is an union of subarcs $\omega$ of $\Upsilon_j^m$ collected in a partition
$\PP_k = \PP_k(j,m)$ of this curve.

For each $m > N$ we want to exhibit a \tf curve $\Upsilon^m =\Upsilon_m^m$
such that $\measuredangle_j(a,\theta) \approx \varrho^m$ for all
($a,\theta) \in \Omega_j(m) := \Omega_j(m,m)$, that is to say, for all
$(a,\theta) \in \Upsilon^m$ not excluded by \textsf{EPA} up to time $j$.
Here the scale $\approx$ is chosen to precludes presence of tangencies
and will be made precise in a little while.

The curve $\Upsilon^m$ will be constructed from $\Upsilon_N^m$
by applying \textsf{EPA} a finite number of times and a {\bf correction
procedure} (to be detailed) in order to get
\[
 \Upsilon_N^m \rightsquigarrow
 \Upsilon_{N+1}^m \rightsquigarrow
 \Upsilon_{N+2}^m \rightsquigarrow
\cdots
 \Upsilon_{m-1}^m \rightsquigarrow
 \Upsilon_{m}^m  = \Upsilon^m
\]

We describe this construction and formulate precisely
the correction procedure inductively. Suppose we have
just constructed $\Upsilon_j^m$, with $N \leq j \leq m-1$
satisfying:
\begin{equation}
 \label{eq:upsjm}
\abs{\measuredangle_j(a,\theta) - \varrho^m } \leq (Cb)^j
\qquad \text{ for all } (a,\theta) \in \Omega_j(j,m)
\end{equation}

By induction we are able to apply the induction of \textsf{EPA}
to $\Upsilon_j^m$ up to time $j$. For each $1 \leq k \leq j$
the  partition $\PP_k = \PP_k(j,m)$ of subarcs $\omega$ of $\Upsilon_j^m$
is obtained first refining $\PP_{k-1}$ and then throwing away
some of its arcs by the exclusion rules of \textsf{EPA}.
Let $\omega$ be one of the arcs of $\PP_{j}$.
In the most general case there is a sub-arc $\omega_{exc} \subset \omega$
which must be excluded while passing from time $j$ to $j+1$. Let
$\omega'$ be a connected component of $\omega \setminus \omega_{exc}$.
Note that on $\omega'$  by definition we have
\begin{equation}\label{eq1}
 \abs{ \measuredangle_j - \varrho^m} < (Cb)^j
\end{equation}
and by Lemma~\ref{lem:cd}
\begin{equation}\label{eq:antespert}
\abs{\measuredangle_{j+1} - \measuredangle_j } < (Cb)^j
\end{equation}

We claim that near $\omega'$ we can find another arc of \tf curve $\omega''$
on which we have
\begin{equation}\label{eq2}
 \measuredangle_{j+1} = \varrho^m
\end{equation}

This can be deduced from the following facts.
The contractive directions are ($C\sqrt{b}$) almost constant
with respect to the phase and parameter space (see Lemma~\ref{lem:cd})
where they are defined.
Also the dynamics of $f(a,\theta,z)$ and $f(\tilde{a},\tilde{\theta},\tilde{z})$
are
indistinguishable up to time $j+1$ if the distance between
$(a,\theta,z)$
and
$(\tilde{a},\tilde{\theta},\tilde{z})$ is at most of order $C(\sqrt{b})^{j+1}$
and, in view of (\ref{eq:varang}), for compensating the terms of $\mathcal{O}(b^j)$
in (\ref{eq:antespert}) we only need to perturb $\theta$ in a order of magnitude
less than $\mathcal{O}(b^j)$.

Finally, we define $\Upsilon_{j+1}^m$ as a \tf curve containing each one
of those corrected arcs.
In order to be more precise about the existence of such a curve we
recall that each $\omega_{exc}$ has length of order of $Ce^{-\alpha j}$ which
is $\gg$ than the distance from $\omega'$ to its corrected version
$\omega''$.
Note also that for each arc of $\PP_{j+1}(j+1,m)$ we can not be sure that
(\ref{eq2}) still holds but the same arguments above
give surely that
\[
 \abs{ \measuredangle_{j+1} - \varrho^m} \leq (Cb)^{j+1}
\]
which recovers exactly the induction hypothesis of (\ref{eq:upsjm}).

Note that this strategy yields a \tf curve $\Upsilon^m = \Upsilon_m^m$
satisfying
\begin{itemize}
 \item[] If $(a,\theta) \in \Omega_k(m)$, $k \geq m$ then
 $\abs{ \measuredangle_k(a,\theta) - \varrho^m} < (Cb)^m \ll \varrho^m$.
\end{itemize}
%
Hence for each $(a,\theta)$ in the positive Lebesgue measure set $\Omega_\infty(m)$
we have

\[
 0 < \abs{ \measuredangle_\infty(a,\theta)} \approx \varrho^m.
\]


\section{Proof of Theorem~\ref{teo.main}}

In the previous section we construct subsets $\Omega_\infty(m)$ of the
parameter space.
According to Definition \ref{def:tfcurve} and since
the curves $\Upsilon^m$ and $\Upsilon^{m+k}$ are exponentially close on
$m$ for all $k$, we conclude that the family of smooth functions on
$[-\eta,\eta]$ whose graphs correspond to the family of curves
$(\Upsilon^m)_m$ converge uniformly to a continuous function and so
it is well defined the limit
$$
\widehat{\Omega}_\infty = \lim_m \Omega_\infty(m).
$$
The construction of each set $\Omega_\infty(m)$
is based on the exclusion parameters argument
whose one of the most relevant feature is that each
such final set has
positive measure and, in fact, similar to \cite{HMS07}, the construction
is uniform on $\theta$-parameters and so the measures of
$\Omega_\infty(m)$ are uniformly bounded away from zero, for all $m$.
These considerations yields the conclusion that
 $\widehat{\Omega}_\infty$ has positive measure.

The main feature of a parameter $(a,\theta) \in \widehat\Omega_\infty$
is the non hyperbolicity of the correspondent map $f_{a,\theta}$\;:
there is a point $\hat{z}$ in $W^u(P)$ such that the tangent direction
of $W^u(P)$ at $\hat{z}$ is mapped by the derivative of $f_{a,\theta}$
on a contractive direction.
So, it can not be hyperbolic once a direction is both forward and
backward exponentially contracted.
Thus, each parameter $(a,\theta) \in \widehat\Omega_\infty$ corresponds
to a non hyperbolic map $f_{a,\theta}$. By construction,
$\widehat\Omega_\infty$ is accumulated by $\Omega_\infty(k)$.
The fact that $f_{a,\theta}$, $(a,\theta) \in \widehat\Omega_\infty$,
is accumulated by hyperbolic maps $f_{a_k,\theta(a_k)}$,
$(a_k,\theta(a_k)) \in \Omega_\infty (m)$ (see
Corollary~\ref{c.exp-gr}) implies that $f_{a,\theta}$ belongs to the
boundary of $\HS$.

In order to prove Theorem~\ref{teo.main} we show in
Corollary~\ref{c.lyap-exp} the hyperbolicity of periodic points of
$f_{a,\theta}$, $(a,\theta) \in \widehat\Omega_\infty$.

In Proposition~\ref{p.non-per}, we show the existence of a positive
measure subset of $\widehat\Omega_\infty$ such that the unstable
manifold of $P$ is tangent to a stable manifold not associated to
periodic points.

From here we get that every point in the neighborhood $\mathcal{U}$
of $\Lambda$ (see Section~\ref{s:fam}) expands the horizontal direction:

\begin{prop}
\label{p.exp-gr}
Let $(a,\theta)$ be in $\Omega_\infty(m)$, $m \ge N$. There exists
$\hat{c} > 0$ and $\sigma > 1$ such that for every
$z \in \mathcal{U}$, we have
$$
\abs{Df_{a,\theta}^k(z)\cdot (1,0)} \ge \hat{c} \sigma^k.
$$
\end{prop}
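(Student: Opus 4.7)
The plan is to extend the exponential-growth-of-derivatives estimate, already established along orbits of critical points on $W^u(P)$, to arbitrary points of the trapping neighbourhood $\mathcal{U}$. The key idea is that strong dissipativeness forces any $z \in \mathcal{U}$ to approach the attractor $\Lambda = \overline{W^u(P)}$ at a rate of order $\sqrt{b}$ per iterate, so that after a brief transient the orbit of $z$ is shadowed by orbits of points on $W^u(P)$ and the machinery of Section~\ref{ss:ind} becomes applicable. Accordingly I would split the orbit $z_k = f_{a,\theta}^k(z)$ into free iterates (outside $R_\delta$) and bound periods opened at free returns to $R_\delta$, exactly as is done for critical approximations, and then reassemble the pieces.

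First, during any maximal run of free iterates I would use the Lemma of Section~\ref{ss:mane} together with Lemma~\ref{lem:cd} to show that $\bfw_j(z)$ stays almost horizontal (slope at most $C\sqrt{b}$) and is multiplied by a factor $\geq \sigma_0>1$ at each step. Second, at each free return time $n_i$ I would attach to $z_{n_i}$ a binding critical point $\xi_i \in \mathcal{C}_{n_i}$, namely one minimising the horizontal distance to $z_{n_i}$; the rapid contraction of $\mathcal{U}$ towards $\Lambda$ guarantees that $z_{n_i}$ lies within distance $\mathcal{O}((Cb)^{n_i/2})$ of some almost flat arc of $W^u(P)$ carrying such an approximation, so the splitting algorithm and Lemma~\ref{lem:boundp}(b) apply verbatim with $z_{n_i}$ in place of $\gamma(\hat{s})$, delivering
\[
\norm{\bfw_{n_i+p_i}(z)} \;\geq\; e^{\tilde{c}(p_i+1)}\,\norm{\bfw_{n_i-1}(z)}.
\]
Third, stringing together the expansion at rate $\sigma_0$ on free iterates with the net bound-period expansion and invoking the frequency condition $(\mathrm{FA})$ inherited from the critical approximations associated to $(a,\theta) \in \Omega_\infty(m)$, I would conclude
\[
\abs{Df_{a,\theta}^k(z)\cdot(1,0)} \;\geq\; \hat{c}\, e^{(c-2\alpha)k} \;\geq\; \hat{c}\,\sigma^k
\]
for a suitable $\sigma>1$, the constant $\hat{c}$ absorbing the transient needed for $z$ to reach a neighbourhood of $W^u(P)$ where the binding procedure is available; if $z$ never enters $R_\delta$ the conclusion is immediate from step one.

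The main obstacle will be step two: making rigorous the assignment of a binding critical point to a generic $z \in \mathcal{U}$ and controlling the slope of $\bfw_{n_i-1}$ at the return. For genuine critical points on $W^u(P)$ the induction hypothesis directly delivers an almost horizontal return vector and a well-defined binding, cf.\ \eqref{eq:tp}; for arbitrary $z$ one must combine the exponential contraction in the (near-)vertical stable direction with the angle-variation estimate \eqref{eq:varang} to check that the relative slope error is $\mathcal{O}((Cb)^{n_i/2})$, which is negligible compared with the distances $d_{n_i}(z) \geq e^{-\alpha n_i}$ enforced by $(\mathrm{BA})$, so that the binding remains consistent and the splitting-algorithm estimates of Section~\ref{ss:ind} are unaffected. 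Once this shadowing is in place the rest is a transcription of the critical-orbit argument.
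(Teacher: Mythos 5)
Your skeleton---expansion by $\sigma_0$ on free iterates outside $R_\delta$, binding of returns to critical approximations, recovery via Lemma~\ref{lem:boundp}(b), and reassembly of the blocks---is the same strategy the paper intends when it refers to the one-dimensional analogue \cite[Proposition~7.1]{HMS07}. However, your third step has a genuine gap: you invoke $(\mathrm{BA})$ and $(\mathrm{FA})$ for the orbit of the \emph{arbitrary} point $z\in\mathcal{U}$ (the lower bound $d_{n_i}(z)\ge e^{-\alpha n_i}$ and the frequency bound giving $e^{(c-2\alpha)k}$). These conditions are hypotheses on the orbits of the critical approximations of the maps $f_{a,\theta}$ with $(a,\theta)$ surviving the exclusions; they are not inherited by arbitrary points of the trapping region. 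A generic $z$ may return to $R_\delta$ arbitrarily deeply (it can land exactly on a critical point, or on the point realizing the minimal angle) and with arbitrarily high frequency---think of points on the stable leaves through the critical orbit---so neither the lower bound on $d_{n_i}(z)$ nor the bound $\sum_i p_i\le\alpha k$ is available, and the estimate you string together fails precisely for those orbits.

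The ingredient that actually makes the conclusion uniform in $z$, and which your argument never uses, is that the parameter lies in $\Omega_\infty(m)$ for a \emph{finite} $m$: by the construction of Section~\ref{ss:tang}, $\measuredangle_\infty(a,\theta)\approx\varrho^m>0$, so at every return the splitting coefficient satisfies $\abs{\beta}\approx\widehat\Theta+K_1d^2\ge c\,\varrho^m$ \emph{independently of the depth} $d$ of the return. Hence the loss of expansion at any return is bounded below by a constant depending only on $m$, and, by the estimates in the proof of Lemma~\ref{lem:boundp}, it is recovered with a definite net gain after at most $\mathcal{O}(\log(1/\varrho^m))$ iterates spent copying the exponential growth $(\mathrm{EG})$ of the bound critical orbit, no matter how small $d$ is (even $d=0$). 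Stringing these uniformly bounded blocks with the free iterates yields $\abs{Df_{a,\theta}^k(z)\cdot(1,0)}\ge\hat c\,\sigma^k$ with constants depending only on $m$ and with no recurrence control on $z$ whatsoever; the statement is really a uniform hyperbolicity statement (cf.\ Corollary~\ref{c.exp-gr}), not a non-uniform one. Your shadowing concern in step two (attaching a binding point to a return of $z$ lying off $W^u(P)$) is legitimate and is handled by the exponential approach of orbits in $\mathcal{U}$ to $\Lambda$ together with Lemma~\ref{lem:cd}, but it does not repair the missing $(\mathrm{BA})$/$(\mathrm{FA})$ input; replace that appeal by the angle lower bound $\varrho^m$ and the argument goes through.
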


\begin{proof}
Analogous to \cite[Proposition 7.1]{HMS07}.
\end{proof}

\begin{cor}
\label{c.exp-gr}
For all $(a,\theta)$ in $\Omega_\infty(m)$, $m \ge N$, the map $f_{a,\theta}$
is (uniformly) hyperbolic.
\end{cor}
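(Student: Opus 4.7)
The plan is to upgrade the pointwise horizontal expansion provided by Proposition~\ref{p.exp-gr} to a continuous uniformly hyperbolic splitting on the maximal invariant set $\Lambda_{a,\theta}=\bigcap_{n\ge 0}f_{a,\theta}^n(\mathcal{U})$. Since outside $\mathcal{U}$ the diffeomorphism $f_{a,\theta}$ is $C^r$--close to the Axiom~A map $f$, its non-wandering set decomposes into $\Lambda_{a,\theta}$ together with the unperturbed hyperbolic basic sets of $f$, so the content of the corollary reduces to hyperbolicity of $\Lambda_{a,\theta}$.

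First I would produce a continuous stable direction $E(z)$ at every $z\in\mathcal{U}$. Proposition~\ref{p.exp-gr} says that every point is $\sigma$-expanding for all positive times, which is exactly the hypothesis of Lemma~\ref{lem:cd} (taken with $\xi=\hat z=z$). Hence the finite-order contractive fields $E^{(n)}(z)$ are defined for every $n\ge 1$ and $z\in\mathcal{U}$, are almost vertical, and satisfy $\ang(E^{(n)}(z),E^{(n+1)}(z))\le (Cb)^n$. The limit $E(z)=\lim_{n\to\infty}E^{(n)}(z)$ is therefore a continuous $Df_{a,\theta}$-invariant line field on $\mathcal{U}$, along which the derivative contracts by a factor bounded above by $C\sqrt{b}\ll 1$.

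Next I would construct the unstable direction by pulling back the horizontal field along backward orbits inside $\Lambda_{a,\theta}$: for $z\in\Lambda_{a,\theta}$ set $F_n(z)=Df_{a,\theta}^{n}(f_{a,\theta}^{-n}(z))\cdot(1,0)$, normalized. Strong dissipativity $|\det Df_{a,\theta}|\le Kb$ together with the uniform contraction along $E$ forces a uniform expansion rate bounded below by $(Kb)^{-1}C\sqrt{b}\gg 1$ on any direction not collinear with $E$. Since $(1,0)$ is uniformly transverse to the almost-vertical field $E$, this makes the sequence $(F_n)$ Cauchy at a geometric rate, so it converges to a continuous direction $F(z)$ uniformly transverse to $E(z)$ and uniformly expanded by $Df_{a,\theta}$ at rate $\hat c\sigma^n$. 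The splitting $T_zM=E(z)\oplus F(z)$ on $\Lambda_{a,\theta}$ is thus continuous, $Df_{a,\theta}$-invariant and hyperbolic.

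The main obstacle is ensuring that $E$ and $F$ remain uniformly transverse at every point of $\Lambda_{a,\theta}$, including those whose forward orbit makes deep returns to the critical region $R_\delta$. This is exactly what Proposition~\ref{p.exp-gr} supplies: the horizontal vector is expanded at a uniform exponential rate at \emph{every} point of the trapping neighborhood, not merely along the inductively built critical orbits of Section~\ref{ss:cp}. With this in hand, invariance of a narrow horizontal cone around $F$ outside $R_\delta$ follows from Remark~\ref{r.h4} and the lemma of Section~\ref{ss:mane}, while inside $R_\delta$ it is immediate from the explicit local form of $\Phi_{a,\theta}$ together with the dominated nature of the splitting; this completes the proof that $f_{a,\theta}$ is uniformly hyperbolic on its non-wandering set.
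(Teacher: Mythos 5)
Your argument follows the same route as the paper's terse proof: Proposition~\ref{p.exp-gr} provides uniform horizontal expansion at \emph{every} point of the trapping region, which together with strong dissipativity yields the $Df_{a,\theta}$-invariant hyperbolic splitting; you are simply spelling out the construction of $E$ as the limit of the finite-order fields $E^{(n)}$ from Lemma~\ref{lem:cd}, and of $F$ by pulling back the horizontal direction along backward orbits. One small slip is worth fixing: the upper bound $|\det Df_{a,\theta}|\le Kb$ together with the contraction along $E$ cannot by itself yield the expansion rate $(Kb)^{-1}C\sqrt{b}$ you quote (that inference would require a \emph{lower} bound on the Jacobian, which is not available); but the step is superfluous, since the expansion on directions uniformly transverse to the nearly vertical $E$ is supplied directly by Proposition~\ref{p.exp-gr}, and the determinant bound then yields the complementary contraction.
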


\begin{proof}
Recall that the expansion of some direction besides the global strong
dissipativeness of $f_{a,\theta}$ imply the existence of a $Df_{a,\theta}$
invariant splitting of the tangent space of the neighborhood of $\mathcal{U}$
in contractive and expanding subbundles.
\end{proof}

\begin{prop}
\label{p.lyap-exp}
Let $(a,\theta)$ be in $\widehat\Omega_\infty$. For (Lebesgue)
almost every $z\in \Lambda_{a,\theta}$, including all periodic
points,
$$
\lambda_{\inf}(z) = \liminf_{n\to \infty} \frac{1}{n+1}
\log \abs{Df_{a,\theta}^n(z)\cdot (1,0)} > 0.
$$
\end{prop}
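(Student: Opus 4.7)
The plan is to transfer the exponential-growth estimates already established for the critical approximations (Section~\ref{ss:ind} and Proposition~\ref{p.exp-gr}) to Lebesgue-generic points of $\Lambda_{a,\theta}$ by applying the same return/bound-period machinery to the orbit of an arbitrary $z$. For $(a,\theta) \in \widehat\Omega_\infty$ the contractive fields $E^{(k)}$, the critical approximation sets $\mathcal{C}_k$, and the splitting algorithm of Section~\ref{ss:ind} all pass to the limit, since they arise as uniform limits of the corresponding objects constructed at the nearby parameters $(a_m,\theta_m) \in \Omega_\infty(m)$ furnished by Corollary~\ref{c.exp-gr}. Consequently the binding mechanism---each visit of an orbit to $R_\delta$ is paired with a sufficiently close critical approximation and is followed by a controlled bound period---remains available for \emph{every} orbit, not just for orbits of the critical points themselves.

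First I would show that Lebesgue-almost every $z \in \mathcal{U}$ satisfies an asymptotic basic assumption $(BA)$: the return depth $d_n(z) = \dist(f_{a,\theta}^n(z), \mathcal{C})$ to the set $\mathcal{C}$ of limit critical points is bounded below by $e^{-\alpha n}$ for all but finitely many $n$. The natural route is to construct an SRB measure $\mu$ on $\Lambda_{a,\theta}$ as a weak-$\ast$ accumulation point of the SRB measures of the uniformly hyperbolic maps $f_{a_m,\theta_m}$, and to verify that $\mu$ gives zero mass to $\mathcal{C}$. The uniform-in-$m$ exclusion estimate $\Leb(\Omega_n \setminus \Omega_{n+1}) \leq \varepsilon^n$ recalled in Section~4 yields tightness for the recurrence tails and hence a Borel--Cantelli conclusion for $\mu$-almost every point. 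Absolute continuity of the stable lamination, which persists across the tangency, then propagates the statement from $\mu$-a.e.\ to Lebesgue-a.e.\ in $\mathcal{U}$.

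With $(BA)$ in hand for such a generic $z$, I would decompose its orbit into free iterates outside $R_\delta$ and nested bound periods as in Section~\ref{ss:ind}. Each free iterate contributes the factor $\sigma_0 > 1$ of the Lemma in Section~\ref{ss:mane} to $\|\bfw_j(z)\|$; each free return $\nu$ with bound period $p$ contributes, via the splitting algorithm and Lemma~\ref{lem:boundp}(b), a net ratio
\[
  \frac{\|\bfw_{\nu+p}(z)\|}{\|\bfw_{\nu-1}(z)\|} \;\geq\; e^{\tilde c (p+1)}.
\]
Combining these contributions with the frequency bound $(FA)_n$, which itself is a consequence of $(BA)$ together with the distortion estimates used in the construction of $\Omega_\infty(m)$, I obtain
\[
  \frac{1}{n+1}\log\|Df_{a,\theta}^n(z)\cdot(1,0)\| \;\geq\; \lambda_0 + o(1)
\]
for some $\lambda_0>0$ independent of $z$, hence $\lambda_{\inf}(z) > 0$. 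The case of a periodic orbit reduces to a finite-length version of the same estimate: either the orbit avoids $R_\delta$, in which case positivity is immediate from the Lemma in Section~\ref{ss:mane}, or it enters $R_\delta$ and Lemma~\ref{lem:boundp}(b) bounds $\|Df^N(z)\cdot(1,0)\|$ from below by $e^{\lambda_0 N}$ over one period, yielding hyperbolicity.

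The main obstacle will be the Lebesgue-almost-everywhere validity of $(BA)$. For critical approximations this is enforced by parameter exclusion, whereas for an arbitrary $z$ it requires a genuine measure-theoretic input. The delicate step is to show that the limit SRB measure exists, is supported away from $\mathcal{C}$ on a set of full measure, and inherits from the parameter exclusion that built $\Omega_\infty(m)$ the uniform tail estimate needed to invoke Borel--Cantelli. Without these estimates the binding/splitting construction cannot be carried out along an unbiased orbit, and both the $\mu$-a.e.\ statement and the hyperbolicity of those periodic orbits that enter $R_\delta$ would fail.
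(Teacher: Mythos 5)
The paper offers only the single line ``Analogous to [HMS07, Proposition 7.2]'' here, so there is no worked-out argument in the text against which to compare the route you took. Your overall strategy---transfer the bound-period/splitting machinery from critical approximations to generic orbits, and handle periodic points by a finite-orbit argument---is the natural one, and the SRB-measure/Borel--Cantelli/absolute-continuity route for the Lebesgue-a.e.\ part is a reasonable way to fill in what the paper leaves implicit. However, as written the proposal has two genuine gaps that would need to be repaired.

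First, the parenthetical claim that $(FA)_n$ ``is a consequence of $(BA)$ together with the distortion estimates'' is wrong. In the Benedicks--Carleson/Mora--Viana scheme (which this paper follows), $(BA)$ controls the \emph{depth} of returns and $(FA)$ controls their aggregate \emph{frequency}; they are enforced by two separate exclusion rules and neither implies the other. For a Lebesgue-generic orbit you still have to argue $(FA)$ directly, for instance from the smallness of the critical region and an ergodic-theoretic or Borel--Cantelli input, which your sketch gestures at but conflates with the $(BA)$ step. Second, the periodic-point case is dispatched too quickly: Lemma~\ref{lem:boundp}(b) is proved for $z_0 \in \CC_k$ under the standing hypotheses $(\textrm{EG})_k$, $(\textrm{BA})_\nu$ and the tangent-slope matching condition~\eqref{eq:tp}, none of which is automatic for an arbitrary periodic orbit. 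To adapt it you need (i) the fact that periodic orbits are disjoint from the limit critical set $\CC$ (which does follow from non-periodicity of critical points, asserted in Proposition~\ref{p.non-per}, and gives a trivial $(BA)$ with orbit-dependent constant), (ii) verification of the slope condition at returns, so that binding to a nearby critical approximation is legitimate, and (iii) some substitute for $(FA)$ over one period so that the products of free-iterate factors $\sigma_0$ and bound-period factors $e^{\tilde c(p+1)}$ do indeed multiply up to $e^{\lambda_0 N}$. Simply invoking the lemma ``over one period'' elides all of this; the lemma as stated does not cover arbitrary orbits.
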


\begin{proof}
Analogous to \cite[Proposition 7.2]{HMS07}.
\end{proof}

\begin{cor}
\label{c.lyap-exp}
Let $(a,\theta)$ be in $\widehat\Omega_\infty$. All periodic point of
$f_{a,\theta}$ are (uniformly) hyperbolic.
\end{cor}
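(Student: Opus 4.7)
The plan is to deduce the hyperbolicity of any periodic point from the positive Lyapunov exponent supplied by Proposition~\ref{p.lyap-exp}, combined with the strong dissipativeness imposed by $(H_3)$. Fix $(a,\theta) \in \widehat\Omega_\infty$ and a periodic point $p$ of period $k$, and write $L = Df_{a,\theta}^k(p)$. The goal is to show that neither eigenvalue of $L$ lies on the unit circle.

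First I would extract an expanding eigenvalue. Proposition~\ref{p.lyap-exp}, applied to $z=p$, guarantees that
$$\liminf_{n\to\infty} \frac{1}{n+1}\log\abs{Df_{a,\theta}^n(p)\cdot (1,0)} > 0.$$
Restricting to the subsequence $n = jk$ shows that $(1,0)$ is exponentially expanded by iterates of $L$, so the spectral radius of $L$ strictly exceeds $1$ and hence $L$ has an eigenvalue $\lambda_1$ with $\abs{\lambda_1} > 1$.

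Next I would control the second eigenvalue via the Jacobian. Strong dissipativeness gives a uniform bound $\abs{\det Df_{a,\theta}(z)} \leq Cb$ valid throughout the trapping neighborhood $\mathcal{U}$: inside $R_\delta$ this is immediate from the explicit local form in $(H_3)$ (the Jacobian there is $-\sigma_1 b$ up to terms of order $b\delta^2$), while outside $R_0$ it follows from the $C^r$-closeness to the strongly dissipative unperturbed map $f$. Multiplying along the orbit of $p$,
$$\abs{\det L} = \prod_{j=0}^{k-1}\abs{\det Df_{a,\theta}(f_{a,\theta}^j(p))} \leq (Cb)^k.$$
Writing $\lambda_2$ for the other eigenvalue of $L$, the identity $\abs{\lambda_1\lambda_2} = \abs{\det L}$ combined with $\abs{\lambda_1} > 1$ forces $\abs{\lambda_2} < (Cb)^k < 1$. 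Both eigenvalues therefore lie off the unit circle, so $p$ is a hyperbolic saddle, and since the argument holds at every periodic orbit the conclusion follows.

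The main substantive difficulty is already absorbed in Proposition~\ref{p.lyap-exp}, which is what asserts positivity of the Lyapunov exponent at every periodic point (not just almost every point of $\Lambda_{a,\theta}$); once that statement is in hand the corollary reduces to the dissipativeness bookkeeping above. The only additional care needed is to verify that the Jacobian estimate is uniform on the whole neighborhood $\mathcal{U}$ rather than merely on $\Lambda_{a,\theta}$, but this is a direct consequence of the $C^r$-proximity encoded in hypotheses $(H_1)$--$(H_3)$.
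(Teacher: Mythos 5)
Your proposal is correct and follows essentially the same route as the paper: apply Proposition~\ref{p.lyap-exp} at the (finite) periodic orbit to obtain expansion in the horizontal direction, and then invoke strong dissipativeness to force the complementary direction to contract. The paper states this in two sentences without the spectral-radius and determinant bookkeeping you supply, but the underlying argument is identical; your elaboration is a faithful unpacking of it. (One minor quibble: the constant in front of $b$ in the local Jacobian is not exactly $-\sigma_1 b$ with only $O(b\delta^2)$ error, since the $AD$ term can contribute at order $\sigma_1 b$ as well, but the bound $\abs{\det Df_{a,\theta}}\leq Cb$ that you actually use is correct.)
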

\begin{proof}
The fact that the orbit of a periodic point is finite
associated to the claim of the previous proposition
gives the
expansion of the derivative along the horizontal direction. Again, by
the strong dissipativeness of the jacobian, it follows the existence of
a contractive direction.
\end{proof}

Finally, we prove the existence of a positive measure subset where
Theorem~\ref{teo.main} holds.
\begin{prop}
\label{p.non-per}
There is a positive measure subset $\widehat{\mathcal{A}}$ of
$\widehat\Omega_\infty$ such that for all $(a,\theta)$ in
$\widehat{\mathcal{A}}$ there exists a stable manifold $W^s$ of
$f_{a,\theta}$ not associated to a periodic point such that
$W^u(P)$ and $W^s$ are tangent.
\end{prop}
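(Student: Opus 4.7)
By the construction carried out in \S\ref{ss:tang}, each parameter $(a,\theta)\in\widehat\Omega_\infty$ yields a limit critical point $z_\infty=z_\infty(a,\theta)\in W^u(P)$ at which $Df_{a,\theta}$ sends the tangent line to $W^u(P)$ onto the limit contractive direction $E^{(\infty)}(f(z_\infty))$. The integral leaf of this contractive field through $f(z_\infty)$ is, by the stable manifold machinery built in the strongly dissipative regime (Lemma~\ref{lem:cd} and the standard saturation by $f_{a,\theta}^{-1}$), a globally defined stable manifold $W^s=W^s(a,\theta)$, and $W^u(P)$ is tangent to $W^s$ at $f(z_\infty)$ by the very definition of $z_\infty$. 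It remains to exhibit a positive measure set $\widehat{\mathcal{A}}\subset\widehat\Omega_\infty$ on which $W^s$ is \emph{not} the stable manifold of any periodic point.

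By Corollary \ref{c.lyap-exp}, all periodic orbits of $f_{a,\theta}$ are uniformly hyperbolic and hence isolated, so we may enumerate them as $\{\pi_k(a,\theta)\}_{k\geq 1}$, each depending smoothly on $(a,\theta)$. For every $k$ put
$$
B_k=\{(a,\theta)\in\widehat\Omega_\infty : f_{a,\theta}(z_\infty(a,\theta))\in W^s(\pi_k(a,\theta))\}.
$$
Since both tangent directions at $f(z_\infty)$ are contractive for the iteration, $W^s$ coincides with $W^s(\pi_k)$ if and only if $f(z_\infty)\in W^s(\pi_k)$, i.e.\ if and only if $(a,\theta)\in B_k$. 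I would then aim to show that $\bigcup_k B_k$ has one-dimensional Lebesgue measure zero inside $\widehat\Omega_\infty$, and set $\widehat{\mathcal{A}}=\widehat\Omega_\infty\setminus\bigcup_k B_k$.

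For a fixed $k$, $W^s(\pi_k(a,\theta))$ is a $C^r$ curve whose tangent drifts by $\mathcal{O}(\sqrt{b})$ under parameter variation (Lemma \ref{lem:cd}), whereas the local form $(H_3)$ shows that varying $\theta$ genuinely bends the image of horizontal directions through the cubic term $-A_1\theta x+B_1x^3$. Combining the distortion estimates of Lemma~\ref{lem:cd} with the correction procedure of \S\ref{ss:tang} (which controls the dependence of the critical approximations $z^{(j)}$, hence of $z_\infty$, along each \tf curve $\Upsilon^m$), one obtains that
$$
\phi_k(a,\theta):=\dist\bigl(f_{a,\theta}(z_\infty(a,\theta)),\,W^s(\pi_k(a,\theta))\bigr)
$$
has a uniformly non-zero $\theta$-derivative transverse to $\widehat\Omega_\infty$. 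Consequently $B_k=\phi_k^{-1}(0)\cap\widehat\Omega_\infty$ meets every \tf curve $\Upsilon^m$ in at most a discrete set, and a countable union over $k$ (and over $m$ in the approximation of $\widehat\Omega_\infty$) still has measure zero, producing $\widehat{\mathcal{A}}$ of the same positive measure as $\widehat\Omega_\infty$.

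The main obstacle is the transversality assertion above. The delicacy is that $z_\infty$ is only defined on the Cantor-like set $\widehat\Omega_\infty$, so its dependence on parameters is not classically smooth. The argument must proceed through the Lipschitz control on the finite-order critical approximations supplied by Lemma~\ref{lem:cd}, together with the quantitative angle estimates of \S\ref{ss:tang} (where $\measuredangle_k\approx\varrho^m$), using that the unfolding direction in $(H_3)$ is genuinely cubic and $\theta$-transverse on the scales $\varrho^m$, so that no accidental ``sliding'' of $f(z_\infty)$ along a stable manifold $W^s(\pi_k)$ can occur on a set of positive measure.
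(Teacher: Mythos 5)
Your approach is genuinely different from the paper's, but it has an essential gap that you yourself flag: the transversality assertion that $\phi_k(a,\theta)=\dist\bigl(f_{a,\theta}(z_\infty(a,\theta)),\,W^s(\pi_k(a,\theta))\bigr)$ has a uniformly non-zero $\theta$-derivative is neither proved nor reduced to anything established earlier. That assertion is, in fact, the entire content of the proposition. It is far from routine: $z_\infty$ lives only on the Cantor-like set $\widehat\Omega_\infty$, the global stable manifold $W^s(\pi_k)$ is a non-compact immersed curve that accumulates densely on $\Lambda$, and stable leaves of periodic points are dense at every scale. Nothing in Lemma~\ref{lem:cd} or in the correction procedure of \S\ref{ss:tang} rules out the tangency point ``sliding'' along one such leaf as $\theta$ varies; those estimates control angles between nearby contractive fields, not relative positions of $f(z_\infty)$ and a prescribed stable leaf. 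Without a positive argument here, defining $\widehat{\mathcal{A}}=\widehat\Omega_\infty\setminus\bigcup_k B_k$ does not yield a positive-measure set.

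The paper circumvents this entirely with a recurrence argument rather than a transversality one. The key observations are: (i) the tangency point $t$ is not periodic (this follows from the basic assumption $(BA)$ controlling recurrence of critical orbits); (ii) if $t$ lay on a stable manifold of a periodic orbit, the forward orbit of $t$ would converge to that periodic orbit and hence, since $t$ is not on it, would eventually avoid some fixed ball $B(t)$ around $t$; (iii) by hyperbolicity outside a neighborhood of $t$ and the bounded distortion between phase and parameter space established in the inductive construction, the set of parameters for which the forward orbit of $t$ eventually avoids a ball of radius $1/n$ around $t$ has zero measure, for each $n$; (iv) a countable union over $n$ then covers all the ``periodic tangency'' parameters. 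This replaces the delicate pointwise transversality you would need by a measure-theoretic (Borel--Cantelli type) recurrence statement already implicit in the parameter exclusion machinery. If you wish to retain your structure, you would have to supply the missing transversality estimate, and it is unclear it can be obtained by the tools cited; the recurrence route is the one the construction actually supports.
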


\begin{proof}
It follows from the control of recurrence of each critical orbit
that all critical points are non periodic. In particular, the point
of tangency $t=t_{a,\theta}$, $(a,\theta) \in \widehat\Omega_\infty$
is non periodic.

Moreover, if the tangency point $t$ belongs to a stable manifold of a
periodic point, then for each neighborhood $B(t)$ fixed,
there are a finite number of forward iterates inside $B(t)$.

Due the hyperbolicity outside a fixed neighborhood of $t$ and the
bounded distortion between phase space and parameter space, we have that
the set of parameters such that the forward orbit of the tangency point
goes into $B$ just a finite number of time has zero measure.
For $n$ large enough, let $\mathcal{B}_n$ be the set of parameters such
that the forward tangency orbit never goes to the ball of radius $1/n$
centered in $t$. Note that the set of parameters such that the tangency
orbit belongs to a stable manifold of a periodic orbit is contained in
$\cup \mathcal{B}_n$ which has zero measure. Hence, the subset of
$\widehat\Omega_\infty$ whose tangency belongs to a periodic point has
zero measure. This proves the proposition.
\end{proof}

To conclude the proof of Theorem~\ref{teo.main}, take
$$
\mathcal{A} = \left\{ a \in [\eta,-\eta] \colon (a,\theta) \in \widehat{\mathcal{A}}
\right\}.
$$
Since $\widehat{\mathcal{A}}$ is a limit set of positive measure subsets of
$\Omega_\infty(m)$ whose measures are uniformly bounded away from zero
and these sets are $\theta$-flat curves, we conclude
that $\mathcal{A}$ has positive measure, finishing the proof of Theorem~\ref{teo.main}.


\begin{thebibliography}{10}

\bibitem{BC85}
M.~Benedicks and L.~Carleson.
\newblock On iterations of \/$1-ax^2$ on $(-1,1)$.
\newblock {\em Annals of Math.}, 122:1--25, 1985.

\bibitem{BC91}
M.~Benedicks and L.~Carleson.
\newblock The dynamics of the {H}{\'e}non map.
\newblock {\em Annals of Math.}, 133:73--169, 1991.

\bibitem{BeV01}
M.~Benedicks and M.~Viana.
\newblock Solution of the basin problem for {H}{\'e}non-like attractors.
\newblock {\em Invent. Math.}, 143(2):375--434, 2001.

\bibitem{BDV98}
Ch. Bonatti, L.~J. D{\'i}az, and F.~Vuillemin.
\newblock Cubic tangencies and hyperbolic diffeomorphisms.
\newblock {\em Bol. Soc. Brasil. Mat. (N.S.)}, 29(1):99--144, 1998.

\bibitem{DHRV09}
L.~J. D{\'i}az, V.~Horita, I.~Rios, and M.~Sambarino.
\newblock Destroying horseshoes via heterodimensional cycles: generating
  bifurcations inside homoclinic classes.
\newblock {\em Ergodic Theory Dynam. Systems}, 29(2):433--474, 2009.

\bibitem{En98}
H.~Enrich.
\newblock A heteroclinic bifurcation of {A}nosov diffeomorphisms.
\newblock {\em Ergodic Theory Dynam. Systems}, 18(3):567--608, 1998.

\bibitem{HMS07}
V.~Horita, N.~Muniz, and P.~Rog{\'e}rio Sabini.
\newblock Non-periodic bifurcations of one-dimensional maps.
\newblock {\em Ergodic Theory Dynam. Systems}, 27(2):459--492, 2007.

\bibitem{LV03}
S.~Luzzatto and M.~Viana.
\newblock Parameter exclusions in {H}\'enon-like systems.
\newblock {\em Uspekhi Mat. Nauk}, 58(6(354)):3--44, 2003.

\bibitem{MV93}
L.~Mora and M.~Viana.
\newblock Abundance of strange attractors.
\newblock {\em Acta Math.}, 171(1):1--71, 1993.

\bibitem{MY1}
C.~G. Moreira and J.-C. Yoccoz.
\newblock Stable intersections of regular {C}antor sets with large {H}ausdorff
  dimensions.
\newblock {\em Ann. of Math. (2)}, 154(1):45--96, 2001.

\bibitem{NP76}
S.~Newhouse and J.~Palis.
\newblock Cycles and bifurcation theory.
\newblock {\em Ast{\'e}risque}, 31:44--140, 1976.

\bibitem{NPT76}
S.~Newhouse, J.~Palis, and F.~Takens.
\newblock Stable arcs of diffeomorphisms.
\newblock {\em Bull. Amer. Math. Soc.}, 82(3):499--502, 1976.

\bibitem{NPT83}
S.~Newhouse, J.~Palis, and F.~Takens.
\newblock Bifurcations and stability of families of diffeomorphisms.
\newblock {\em Publ. Math. I.H.E.S.}, 57:5--71, 1983.

\bibitem{PT85}
J.~Palis and F.~Takens.
\newblock Cycles and measure of bifurcation sets for two-dimensional
  diffeomorphisms.
\newblock {\em Invent. Math.}, 82:397--422, 1985.

\bibitem{PT87}
J.~Palis and F.~Takens.
\newblock Hyperbolicity and the creation of homoclinic orbits.
\newblock {\em Annals of Math.}, 125:337--374, 1987.

\bibitem{PV94}
J.~Palis and M.~Viana.
\newblock High dimension diffeomorphisms displaying infinitely many periodic
  attractors.
\newblock {\em Ann. of Math.}, 140(1):207--250, 1994.

\bibitem{PY94}
J.~Palis and J.-C. Yoccoz.
\newblock Homoclinic tangencies for hyperbolic sets of large {H}ausdorff
  dimension.
\newblock {\em Acta Math.}, 172:91--136, 1994.

\bibitem{So73a}
J.~Sotomayor.
\newblock Generic bifurcations of dynamical systems.
\newblock In {\em Dynamical systems (Proc. Sympos., Univ. Bahia, Salvador,
  1971)}, pages 561--582. Academic Press, New York, 1973.

\bibitem{St58}
E.~Sternberg.
\newblock On the structure of local homeomorphisms of euclidean $n$-space -
  {II}.
\newblock {\em Amer. J. Math.}, 80:623--631, 1958.

\bibitem{Vi93}
M.~Viana.
\newblock Strange attractors in higher dimensions.
\newblock {\em Bull. Braz. Math. Soc.}, 24(1):13--62, 1993.

\end{thebibliography}

\bigskip

\flushleft

{\bf Vanderlei Horita} (vhorita\@@ibilce.unesp.br)\\
Departamento de Matem\'{a}tica, IBILCE/UNESP \\
Rua Crist\'{o}v\~{a}o Colombo 2265\\
15054-000 S. J. Rio Preto, SP, Brazil

\bigskip

\flushleft

{\bf Nivaldo Muniz} (nivaldomuniz\@@gmail.com)\\
Departamento de Matem\'{a}tica, UFMA \\
Avenida dos Portugueses, S/N \\
65000-000 S\~{a}o Lu\'{\i}s, MA, Brazil

\bigskip

\flushleft

{\bf Paulo Rog\'{e}rio Sabini} \\
Instituto de Matem\'atica e Estat\'{\i}stica, UERJ \\
Rua S\~{a}o Francisco Xavier, 524 \\
20550-900 Rio de Janeiro, RJ, Brazil

\end{document}